\begin{document}
\title{On finite derived quotients of 3-manifold groups}
\author{Will Cavendish\footnote{Supported in part by NSF} }

\theoremstyle{theorem}
\newtheorem{theorem}{Theorem}[section]
\newtheorem{thm}{Theorem}
\newtheorem{prop}{Proposition}
\newtheorem{lemma}{Lemma}
\newtheorem{cor}{Corollary}
\newtheorem{conj}{Conjecture}
\newtheorem{Def}{Definition}
\newtheorem{Quest}{Question}
\newcommand{\dc}{\textrm{d}_\mathcal{C}}
\newcommand{\fpg}{\mathcal{CP}_{g,0}}
\newcommand{\fpgn}{\mathcal{CP}_{g,n}}
\newcommand{\diam}{\mathrm diam}
\newcommand{\sys}{\mathrm{sys}\,}
\newcommand{\PP}{P}
\newcommand{\BB}{{\mathcal B}}
\newcommand{\R}{{\mathbb R}}
\newcommand{\MM}{{\mathcal M}}
\newcommand{\area}{{\rm Area}}
\newcommand{\Sym}{{\rm Sym}}
\newcommand{\Stab}{{\rm Stab}}
\newcommand{\Ext}{{\rm Ext}}
\newcommand{\Hom}{{\rm Hom}}
\newcommand{\Tor}{{\rm Tor}}
\newcommand{\coker}{{\rm coker}}
\newcommand{\into}{{\hookrightarrow}}
\newcommand{\ord}{{\rm ord}}
\renewcommand{\thefootnote}{\fnsymbol{\dagger}}

\author{Will Cavendish\footnote{Supported in part by NSF Grant DMS-1204336} }
\date{}
\maketitle

\begin{abstract} This paper studies the set of finite groups appearing as $\pi_1(M)/\pi_1(M)^{(n)}$, where $M$ is a closed, orientable 3-manifold and $\pi_1(M)^{(n)}$ denotes the $n$-th term of the derived series of $\pi_1(M)$.  Our main result is that if $M$ is a closed, orientable 3-manifold, $n\ge 2$, and $G\cong \pi_1(M)/\pi_1(M)^{(n)}$ is finite, then the cup product pairing $H^2(G)\otimes H^2(G)\to H^4(G)$ has cyclic image $C$, and the pairing $H^2(G)\otimes H^2(G)\stackrel{\smile}{\longrightarrow} C$ is isomorphic to the linking pairing $H_1(M)_{\textrm{Tors}}\otimes H_1(M)_{\textrm{Tors}}\to \mathbb{Q}/\mathbb{Z}$.
\end{abstract}

\section{Introduction}

One of the most elementary invariants of a connected topological space $M$ is its first homology group $H_1(M)$, which, via Hurewitz's theorem may be expressed as $\pi_1(M)/\pi_1(M)^{(1)}$, the quotient of the fundamental group of $M$ by its derived subgroup.  Somewhat more mysterious are the topological invariants given by the higher derived quotients of the fundamental group, $\pi_1(M)/\pi_1(M)^{(n)}$, where $\pi_1(M)^{(n)}$ denotes the $n$-th term of the derived series of $\pi_1(M)$.  This paper studies the groups that appear as $\pi_1(M)/\pi_1(M)^{(n)}$ when $M$ is a closed, orientable 3-manifold, in the special case that $\pi_1(M)/\pi_1(M)^{(n)}$ is finite.  Our interest in these groups is motivated by the following well-known question:

\begin{Quest}  Let $M$ be a closed, orientable $3$-manifold.  If $[\pi_1(M):\pi_1(M)^{(n)}]$ is finite for all $n$, does the derived series of $\pi_1(M)$ stabilize, i.e. is $\pi_1(M)^{(i)}$ a perfect group for some $i$?
\end{Quest}

When the derived series of $\pi_1(M)$ stabilizes, it is well-known that $\pi_1(M)/\pi_1(M)^{(i)}$ is a solvable group with 4-periodic cohomology.  Groups of this form are extremely rare, and are isomorphic to the product of a trivial, dihedral, or generalized quaternion group with a cyclic group of relatively prime order (see \cite{Mil}).  The main thrust of this paper is that the finite groups that appear as $\pi_1(M)/\pi_1(M)^{(n)}$ satisfy restrictive group-theoretic constraints as well.  

These constraints do not appear when $n=1$, since by taking connected sums of lens spaces one can easily show that any finite abelian group appears as $\pi_1(M)/\pi_1(M)^{(1)}\cong H_1(M)$ for a closed orientable 3-manifold $M$.  The question of which finite groups appear as $\pi_1(M)/\pi_1(M)^{(2)}$, however, is already more interesting.  Note that if $\pi_1(M)/\pi_1(M)^{(2)}$ is finite, then $\pi_1(M)^{(1)}/\pi_1(M)^{(2)}$ is a finite group and therefore the maximal abelian cover of $M$ has trivial first Betti number.  Reznikov showed in \cite{Rez} that the fundamental groups of 3-manifolds with this property satisfy a number of non-trivial constraints, and consequently not every metabelian group can appear as $\pi_1(M)/\pi_1(M)^{(2)}$.   The restrictions Reznikov discovered are especially interesting in light of the main result of Cooper and Long in \cite{CL}, which shows that any finite group appears as the group of deck transformations of a regular covering $\rho: M'\to M$ of closed 3-manifolds where $b_1(M')=0$.  This shows that Reznikov's constraints do not arise from obstructions to fixed-point free group actions on rational homology 3-spheres.  

In this paper we build on the themes explored by Reznikov in \cite{Rez} by showing that the cohomology ring of a finite group of the form $\pi_1(M)/\pi_1(M)^{(n)}$ for $n\ge 2$ directly reflects information about the \emph{linking pairing} on $H_1(M)$, which is a non-degenerate bilinear form $H_1(M)_{\textrm{Tors}}\otimes H_1(M)_{\textrm{Tors}}\to \mathbb{Q}/\mathbb{Z}$ whose definition we now recall.  Given an element $[a]\in H_1(M)_{\textrm{Tors}}$ and a loop $\gamma_a$ representing $[a]$, there exists an integer $n\in \mathbb{N}$ such that $n\cdot [a]=0$.  Since the 1-cycle $n\cdot \gamma_a$ is homologically trivial, there exists an immersed oriented surface $\Sigma_a$ in $M$ such that the oriented boundary of $\Sigma_a$ is equal to $n\cdot \gamma_a$.  Given another class $[b]\in H_1(M)_{\textrm{Tors}}$, there exists a loop $\gamma_b$ representing $b$ such that $\gamma_b$ is transverse to $\Sigma_a$.  The value of the linking pairing $\langle [a], [b]\rangle$ is defined by $(\gamma_b \pitchfork \Sigma_a)/n  \in \mathbb{Q}/\mathbb{Z}$, the algebraic intersection number of $\gamma_b$ and $\Sigma_a$ divided by $n$.  

The following theorem shows that the linking pairing on $H_1(M)_{\textrm{Tors}}$ is isomorphic the 2-dimensional cup-product pairing in $H^\ast(G)$ for any finite quotient of $q:\pi_1(M)\to G$ such that $\ker(q)\subseteq \pi_1(M)^{(2)}$:

\begin{thm}\label{mainthm}  Let $M$ be a closed, orientable 3-manifold, let $\Gamma\cong \pi_1(M)$, and let $q: \Gamma \to G$ be a surjective homomorphism such that $\ker(q)\subseteq \Gamma^{(2)}$.  If $G$ is finite, then the cup-product pairing $H^2(G)\otimes H^2(G)\stackrel{}{\rightarrow} H^4(G)$ is non-degenerate and has cyclic image $C<H^4(G)$.  Furthermore, there exists an embedding $i:C\to \mathbb{Q}/\mathbb{Z}$ such that for any $\omega_1, \omega_2\in H^2(G)$,
\[
i(\omega_1\smile \omega_2)=\langle [M]\frown \tilde{q}^\ast(\omega_1), [M]\frown \tilde{q}^\ast(\omega_2) \rangle,  
\]
where $[M]\in H_3(M)$ denotes the fundamental class of $M$, $\langle-,-\rangle$ denotes the linking pairing on $H_1(M)_{\textrm{Tors}}$, and $\tilde{q}: M\to BG$ is a continuous map from $M$ to the classifying space of $G$ such that $\tilde{q}_\ast:\pi_1(M)\to \pi_1(BG)\cong G$ is equal to $q$.
\end{thm}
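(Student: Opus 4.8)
The plan is to first convert everything into a statement about the Bockstein and cup product, and then isolate the one genuinely geometric input. Since $\ker(q)\subseteq\Gamma^{(2)}\subseteq\Gamma^{(1)}$, the quotient $q$ induces an isomorphism $H_1(G)\cong\Gamma/\Gamma^{(1)}\cong H_1(M)$; as $G$ is finite this group is finite, so $b_1(M)=0$ and $M$ is a rational homology sphere with $H_1(M)=H_1(M)_{\mathrm{Tors}}$. Because $H^i(G;\mathbb{Q})=0=H^i(M;\mathbb{Q})$ in the relevant positive degrees, the Bocksteins $\beta$ of $0\to\mathbb{Z}\to\mathbb{Q}\to\mathbb{Q}/\mathbb{Z}\to0$ give isomorphisms $H^1(-;\mathbb{Q}/\mathbb{Z})\cong H^2(-;\mathbb{Z})$ for both $M$ and $G$, and $\beta:H^3(G;\mathbb{Q}/\mathbb{Z})\xrightarrow{\sim}H^4(G;\mathbb{Z})$. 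By naturality of $\beta$, and since $\tilde{q}$ induces an isomorphism on $H^1(-;\mathbb{Q}/\mathbb{Z})=\Hom(H_1(-),\mathbb{Q}/\mathbb{Z})$, the map $\tilde{q}^\ast\colon H^2(G)\to H^2(M)$ is an isomorphism; composing with Poincar\'e duality gives an isomorphism $\Phi:=[M]\frown\tilde{q}^\ast(-)\colon H^2(G)\xrightarrow{\sim}H_1(M)$. Finally I will use the standard cohomological description of the linking pairing: if $\hat a,\hat b\in H^2(M;\mathbb{Z})$ are the Poincar\'e duals of $a,b\in H_1(M)$, then $\langle a,b\rangle=\langle \beta^{-1}(\hat a)\smile \hat b,[M]\rangle$.

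\textbf{The master formula and nondegeneracy.} Fix $\omega_1,\omega_2\in H^2(G)$ and choose $\bar\omega_1\in H^1(G;\mathbb{Q}/\mathbb{Z})$ with $\beta(\bar\omega_1)=\omega_1$. A cochain-level Leibniz computation, using that $\omega_2$ is integral so $\delta$ of its lift vanishes, gives $\beta(\bar\omega_1\smile\omega_2)=\omega_1\smile\omega_2$ in $H^4(G;\mathbb{Z})$. Writing $\mu:=\tilde{q}_\ast[M]\in H_3(G)$, define $\psi\colon H^4(G;\mathbb{Z})\to\mathbb{Q}/\mathbb{Z}$ by $\psi(z)=\langle\beta^{-1}(z),\mu\rangle$; this is well defined since $\beta$ is an isomorphism in this range. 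Naturality of $\beta$ and of $\smile$ under $\tilde{q}^\ast$, together with $\beta^{-1}(\tilde q^\ast\omega_1)=\tilde q^\ast\bar\omega_1$ and the linking-pairing formula above, yield the key identity
\[
\psi(\omega_1\smile\omega_2)=\langle\bar\omega_1\smile\omega_2,\mu\rangle=\langle\tilde{q}^\ast(\bar\omega_1\smile\omega_2),[M]\rangle=\langle\Phi(\omega_1),\Phi(\omega_2)\rangle.
\]
Since $\Phi$ is an isomorphism and the linking pairing is nondegenerate, this immediately gives nondegeneracy of the cup-product pairing: if $\omega_1\smile\omega_2=0$ for all $\omega_2$, then $\langle\Phi(\omega_1),\Phi(\omega_2)\rangle=0$ for all $\omega_2$, forcing $\Phi(\omega_1)=0$ and hence $\omega_1=0$.

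\textbf{Reducing cyclicity and the embedding to one injectivity statement.} Let $C<H^4(G)$ be the image of $\smile$ and set $D:=\beta^{-1}(C)\subseteq H^3(G;\mathbb{Q}/\mathbb{Z})$, the subgroup generated by products $\bar\omega_1\smile\omega_2$ with $\bar\omega_1\in H^1(G;\mathbb{Q}/\mathbb{Z})$, $\omega_2\in H^2(G;\mathbb{Z})$; via the isomorphism $\beta$ one has $D\cong C$. Because $\psi(z)=\langle\tilde{q}^\ast\beta^{-1}(z),[M]\rangle$ and $H^3(M;\mathbb{Q}/\mathbb{Z})\cong\mathbb{Q}/\mathbb{Z}$ via evaluation on $[M]$, the entire remaining content follows from the single claim that $\tilde{q}^\ast$ is \emph{injective} on $D$: injectivity embeds $C\cong D$ into $\mathbb{Q}/\mathbb{Z}$, so $C$ is cyclic, and the resulting embedding $i:=\psi|_C$ satisfies $i(\omega_1\smile\omega_2)=\langle\Phi(\omega_1),\Phi(\omega_2)\rangle$ as required. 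Here I would first reduce to the case $G=\Gamma/\Gamma^{(2)}$: each degree-one class is inflated from $Q:=G/G^{(1)}\cong H_1(M)$, and by the Bockstein isomorphism so is each degree-two class, hence every element of $D$ is inflated from $H^3(\Gamma/\Gamma^{(2)};\mathbb{Q}/\mathbb{Z})$; since the maps to $H^3(M;\mathbb{Q}/\mathbb{Z})$ factor through $\Gamma/\Gamma^{(2)}$, injectivity for the metabelian quotient implies it in general.

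\textbf{The main obstacle.} The crux is thus: for the finite metabelian group $G=\Gamma/\Gamma^{(2)}$, sitting in $1\to\mathcal{A}\to G\to Q\to 1$ with $\mathcal{A}=\Gamma^{(1)}/\Gamma^{(2)}$ the (finite) Alexander module and $Q=H_1(M)$, the inflation $H^3(Q;\mathbb{Q}/\mathbb{Z})\to H^3(G;\mathbb{Q}/\mathbb{Z})$ has the \emph{same} kernel on decomposable classes as the pullback $c^\ast\colon H^3(Q;\mathbb{Q}/\mathbb{Z})\to H^3(M;\mathbb{Q}/\mathbb{Z})$ induced by the classifying map $c\colon M\to BQ$. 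I would attack this by comparing the Lyndon--Hochschild--Serre spectral sequence of the extension with the Cartan--Leray spectral sequence of the maximal abelian cover $\widetilde{M}_{\mathrm{ab}}\to M$ (deck group $Q$), using the fiber map $\widetilde{M}_{\mathrm{ab}}\to B\mathcal{A}$ coming from $\Gamma^{(1)}\twoheadrightarrow H_1(\Gamma^{(1)})=\mathcal{A}$. The hypothesis $\ker(q)\subseteq\Gamma^{(2)}$ is exactly the assertion that $\Gamma^{(1)}/\Gamma^{(2)}\cong\mathcal{A}$, which forces the two fibers to have identical $H^1(-;\mathbb{Q}/\mathbb{Z})$ as $Q$-modules; a diagram chase then shows the two $d_2$-differentials into $E^{3,0}=H^3(Q;\mathbb{Q}/\mathbb{Z})$ have equal image, matching the $d_2$-part of the two kernels. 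The hard part, which is where I expect the real work to lie, is controlling the $d_3$-contribution coming from $H^2$ of the two fibers, which genuinely differ: I would show that decomposable classes killed in passing to $M$ cannot be killed by $d_3$ beyond what $d_2$ already accounts for, invoking 3-manifold Poincar\'e duality — concretely the Blanchfield linking pairing on $\mathcal{A}$ and its compatibility with the linking form on $Q$ — to match the surviving relations. Granting this injectivity, the theorem follows from the formal steps above.
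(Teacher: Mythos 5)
Your formal reductions are sound and largely parallel the paper's: your ``master formula'' is the paper's Lemma \ref{fundid}, non-degeneracy follows from it exactly as you argue, and your observation that injectivity of $\tilde{q}^\ast$ on $D=\beta^{-1}(C)\subseteq H^3(G;\mathbb{Q}/\mathbb{Z})$ would deliver cyclicity and the embedding in one stroke is a correct repackaging of what remains. But the proof has a genuine gap at precisely the point you flag yourself: that injectivity --- equivalently the cyclicity of $C$ --- is the entire content of the theorem beyond the formal identities, and your treatment of it (``a diagram chase then shows\dots'', ``I would show that decomposable classes killed in passing to $M$ cannot be killed by $d_3$ beyond what $d_2$ already accounts for\dots'') is a statement of intent, not an argument. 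No reason is given why the $d_3$-differentials of the two spectral sequences can be matched, and the appeal to the Blanchfield pairing is not developed into a checkable claim. As written, the theorem has been reduced to an unproved assertion.

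The reduction you set up is also harder than necessary, because you have missed the one fact that makes the comparison collapse. You assert that $H^2$ of the two fibers ``genuinely differ''; with integral coefficients they do not: since $\ker(q)\subseteq\Gamma^{(2)}=(\Gamma^{(1)})^{(1)}$, the restriction $\Gamma^{(1)}\to G^{(1)}$ induces an isomorphism $H_1(\widetilde{M}_{ab})\to H_1(G^{(1)})$, and the universal-coefficients argument of Lemmas \ref{H2surj} and \ref{3mfldversion} then makes $H^2(G^{(1)};\mathbb{Z})\to H^2(\widetilde{M}_{ab};\mathbb{Z})$ an isomorphism (Lemma \ref{depth}). This is the decisive use of the hypothesis $\ker(q)\subseteq\Gamma^{(2)}$. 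With the integral $E_2$-pages agreeing through the second row, no control of $d_3$ is needed: $(E_4^{1,2})^\beta$ vanishes because $H^3(M)\cong\mathbb{Z}$ is torsion-free while the relevant terms are torsion, a diagram chase gives $(E_4^{4,0})^\alpha\cong(E_4^{4,0})^\beta$, and the latter is cyclic because $H^4(M)=0$ forces it to be a quotient of $(E_4^{0,3})^\beta\subseteq H^0(Q,H^3(\widetilde{M}_{ab}))\cong\mathbb{Z}$. Since $H^2(Q)\to H^2(G)$ is onto, every decomposable class lies in the image of $H^4(Q)\to H^4(G)$, which is a quotient of $(E_4^{4,0})^\alpha$; hence $C$ is cyclic. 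Injectivity of $i$ then comes from a short order count rather than from your injectivity claim: $|C|$ divides $\exp H^2(G)$ because $C$ is a cyclic quotient of $H^2(G)\otimes H^2(G)$, while Lemma \ref{order} produces $\omega_1\smile\omega_2$ with $i(\omega_1\smile\omega_2)$ of order exactly $\exp H^2(G)$. To complete your proof you would need to either carry out the $d_3$ analysis you postponed, or switch to integral coefficients and target $(E_4^{4,0})$ as above.
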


We remark that Theorem \ref{mainthm} does not hold when the quotient group $\Gamma/\Gamma^{(2)}$ is infinite, even when $H_1(M)=\Gamma/\Gamma^{(1)}$ is finite.  To see this, let $M$ be homeomorphic to $\mathbb{RP}^3\#\mathbb{RP}^3$, the connected sum of two copies of real projective 3-space, and let $G=\pi_1(M)/\pi_1(M)^{(2)}$.  The fundamental group $\pi_1(M)$ is isomorphic to the infinite dihedral group $D_\infty\cong \mathbb{Z}/2\ast \mathbb{Z}/2$, and hence $H_1(M)\cong \mathbb{Z}/2\oplus\mathbb{Z}/2$.  The commutator subgroup of $\mathbb{Z}/2\ast\mathbb{Z}/2$ is isomorphic to $\mathbb{Z}$, so $\pi_1(M)^{(2)}$ is trivial and therefore $G=\pi_1(M)\cong \pi_1(M)/\pi_1(M)^{(2)}\cong \mathbb{Z}/2\ast\mathbb{Z}/2$.  Recall that the infinite dimensional real projective space $\mathbb{RP}^\infty$ is a classifying space for $\mathbb{Z}/2$, and that the cohomology ring $H^\ast(\mathbb{RP}^\infty)$ is generated by a single degree 2 element of order 2.  It follows that the wedge sum $\mathbb{RP}^\infty\vee \mathbb{RP}^\infty$ is a classifying space for $\mathbb{Z}/2\ast\mathbb{Z}/2$.  Since the cohomology ring of a wedge sum of connected spaces is isomorphic to the direct sum of the cohomology rings of the summands modulo the identification of the zeroth cohomology groups, $H^\ast(G)\cong H^\ast(\mathbb{RP}^\infty\vee \mathbb{RP}^\infty)\cong \mathbb{Z}[x,y]/(xy,2x,2y)$, where $\deg(x)=\deg(y)=2$.  The elements $x^2$ and $y^2$ are linearly independent in $H^4(G)$, so the image of the cup-product pairing $H^2(G)\otimes H^2(G)\to H^4(G)$ is not cyclic.  This example can be modified to produce aspherical (indeed hyperbolic) examples of such 3-manifolds using the techniques of Baker, Boileau and Wang in \cite{BBW}. 

As a sample application of Theorem \ref{mainthm}, we demonstrate how it can be used to derive the following result of Reznikov (Theorem 12.5 in \cite{Rez}) from well-known results about $2$-groups and their cohomology rings.

\begin{thm}[Reznikov]  Let $\Gamma$ be the fundamental group of a closed, orientable 3-manifold $M$ such that $H_1(M)\cong \mathbb{Z}/2\oplus\mathbb{Z}/2$.  If $\Gamma/\Gamma^{(2)}$ is finite and $\langle c,c \rangle=0$ for all non-trivial $c\in H_1(M)$, then the Sylow-2 subgroup of $\Gamma/\Gamma^{(2)}$ is equal to the quaternion group of order 8.  If $\Gamma/\Gamma^{(2)}$ is finite and $\langle c,c \rangle $ is non-trivial for some non-trivial $c\in H_1(M)$, then the Sylow-2 subgroup of $\Gamma/\Gamma^{(2)}$ is isomorphic to a generalized quaternion group $Q_{2^k}$ for $k>3$.
\end{thm}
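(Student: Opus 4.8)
The plan is to feed Theorem \ref{mainthm} into the classification of finite $2$-groups: first convert the hypothesis on the linking pairing into cohomological data about $G=\Gamma/\Gamma^{(2)}$, then pass to a Sylow $2$-subgroup, show it has periodic cohomology (hence is generalized quaternion), and finally read off the value of $k$ from the cup-product form.

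I would begin by recording the algebraic content of Theorem \ref{mainthm}. Since $G^{ab}=G/G^{(1)}=\Gamma/\Gamma^{(1)}=H_1(M)\cong\mathbb{Z}/2\oplus\mathbb{Z}/2$ and $H_2(G)$ is finite, the universal coefficient theorem gives $H^2(G;\mathbb{Z})\cong\mathrm{Ext}(H_1(G),\mathbb{Z})\cong(\mathbb{Z}/2)^2$. The linking pairing takes values in $\tfrac12\mathbb{Z}/\mathbb{Z}\cong\mathbb{Z}/2$, so Theorem \ref{mainthm} asserts that the cup-product form on $H^2(G;\mathbb{Z})$ is non-degenerate, has cyclic image $C\cong\mathbb{Z}/2$, and is isomorphic to the linking form of $M$. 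I would then transport this to a Sylow $2$-subgroup $P\le G$: because $[G:P]$ is odd, restriction is injective on the $2$-primary cohomology of $G$, and by the focal subgroup theorem $P\twoheadrightarrow G^{ab}$, so $\mathrm{res}^G_P$ embeds $(\mathbb{Z}/2)^2\cong H^2(G;\mathbb{Z})$ as a subspace $W\le H^2(P;\mathbb{Z})$ carrying a non-degenerate, cyclic-image cup-product form, and $P$ is non-cyclic.

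The heart of the argument is to prove that $P$ is generalized quaternion, for which it suffices to show that $P$ has $2$-rank one: a non-cyclic finite $2$-group with a unique subgroup of order two is a generalized quaternion group $Q_{2^k}$, $k\ge 3$. Writing the elements of $W$ as integral Bocksteins $\tilde\beta(u)$ of classes $u\in H^1(P;\mathbb{F}_2)$ pulled back from $G^{ab}$, suppose $V\cong(\mathbb{Z}/2)^2\le P$ and seek a contradiction. If $\mathrm{res}^P_V$ remains injective on the span of these classes, then the three products of $W$ have mod-$2$ restrictions $u^4,\,u^2v^2,\,v^4$, which are linearly independent in $H^4(V;\mathbb{F}_2)\cong H^4((\mathbb{Z}/2)^2;\mathbb{F}_2)$; as they lie in the restriction to $V$ of the cyclic group $C$, this is impossible, so every $V$ restricts $W$ to rank at most one. \emph{I expect the main obstacle to be the surviving configuration}, in which the relevant products vanish modulo $2$. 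This already happens for $Q_8$ itself, whose cup-product image is the order-two subgroup $4\mathbb{Z}/8\mathbb{Z}\subset H^4(Q_8;\mathbb{Z})\cong\mathbb{Z}/8$ and reduces to zero in $H^4(Q_8;\mathbb{F}_2)$; here Quillen's mod-$2$ detection is blind. One must argue integrally instead, using the Bockstein spectral sequence (equivalently the known integral cohomology of the metabelian $2$-group $P$) to show that if $P$ had $2$-rank at least two then the surviving pairing on $W$ would be degenerate — as it is, for instance, when $P$ contains $(\mathbb{Z}/4)^2$, where every product in $W$ is $4$ times a class of order dividing $4$ and hence vanishes. This contradicts the non-degeneracy supplied by Theorem \ref{mainthm}, forcing $2$-rank one, so $P\cong Q_{2^k}$.

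Finally I would pin down $k$ from the form. For $Q_8$ the automorphisms permute the three order-$4$ subgroups $\langle i\rangle,\langle j\rangle,\langle k\rangle$, hence the three nonzero classes $\alpha,\beta,\alpha+\beta$ of $H^2(Q_8;\mathbb{Z})$; this symmetry forces the self-products to coincide, and since $(\alpha+\beta)^2=\alpha^2+\beta^2$ (as $2\alpha\beta=0$) they must all vanish, so the form is even and $\langle c,c\rangle=0$ for all $c$. For $k\ge4$ this symmetry is broken, as $a$ has order $2^{k-1}>4$ while $b$ has order $4$; realizing $Q_{2^k}\le S^3$ acting freely, applying Theorem \ref{mainthm} to the spherical space form $S^3/Q_{2^k}$ (which has $\pi_1/\pi_1^{(2)}\cong Q_{2^k}$, since $Q_{2^k}$ is metabelian) identifies the cup-product form on $H^2(Q_{2^k};\mathbb{Z})$ with the classically computed linking form of $S^3/Q_{2^k}$, which has a non-zero diagonal entry precisely when $k\ge4$. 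Since $\mathrm{res}^G_P$ carries the form on $H^2(G;\mathbb{Z})$ isomorphically onto that on $H^2(Q_{2^k};\mathbb{Z})$, and Theorem \ref{mainthm} identifies the former with the linking form of $M$, the hypothesis $\langle c,c\rangle=0$ for all $c$ forces $k=3$ (so $P\cong Q_8$), while a non-zero value of some $\langle c,c\rangle$ forces $k\ge4$, as claimed.
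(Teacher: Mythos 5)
Your overall strategy matches the paper's: pass to a Sylow $2$-subgroup, use Theorem \ref{mainthm} to put a non-degenerate form with cyclic image on its second cohomology, conclude that it is a generalized quaternion group $Q_{2^k}$, and then separate $k=3$ from $k\ge 4$ by whether cup squares vanish. Your endgame is fine: the automorphism argument showing all squares vanish in $H^2(Q_8)$ and the appeal to the linking forms of the space forms $S^3/Q_{2^k}$ are acceptable substitutes for the paper's citations of Atiyah and of Hayami--Sanada, and your transfer/restriction argument for passing to the Sylow subgroup is a workable alternative to the paper's device of splitting off the odd part of $G^{(1)}$ to get a retraction $G\to S$ inducing an isomorphism on $H^2$.

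The genuine gap is in the central step, where you try to show directly that $P$ has $2$-rank one. Your mod-$2$ detection argument only covers the case where the relevant classes restrict faithfully to a Klein four subgroup, and you yourself observe that it fails in the ``surviving configuration''; at that point you assert, without proof, that an integral or Bockstein argument shows that $2$-rank at least two forces the pairing on $W$ to be \emph{degenerate}, offering only the single illustration $(\mathbb{Z}/4)^2\le P$. That assertion is both unproven and, as stated, almost certainly false: dihedral $2$-groups have $2$-rank two and abelianization $(\mathbb{Z}/2)^2$, and by Handel's computation the obstruction there is that the image of the cup-product pairing on $H^2$ is \emph{not cyclic}, not that the pairing is degenerate (it is for the semidihedral groups that degeneracy, via Evens--Priddy, is the relevant obstruction). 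So the statement you would actually need is ``$2$-rank $\ge 2$ implies the form is degenerate or has non-cyclic image,'' and establishing that for an arbitrary finite $2$-group is essentially the entire content of the theorem. The paper sidesteps this with group theory: a $2$-group with abelianization $(\mathbb{Z}/2)^2$ is of maximal class (Gorenstein, Section 5.4), hence abelian of order $4$, dihedral, semidihedral, or generalized quaternion, and each non-quaternion case is then eliminated by a known cohomology computation (K\"unneth, Handel, Evens--Priddy respectively). Without either that classification or a complete integral argument in the surviving configuration, your proof does not close.
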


\begin{proof}  Let $G$ denote $\Gamma/\Gamma^{(2)}$, let $S$ denote the Sylow-2 subgroup of $G$, and let $K$ denote $G^{(1)}$.  Note that since $G$ is metabelian, it follows that $K$ is abelian.  Let $K_{(p)}$ denote the $p$-part of $K$ and let $K'$ be the complementary subgroup of $K$ so that $K'\oplus K_{(p)}$.  Note that $K'$ is a characteristic subgroup of $G$, and is therefore normal, and that the order of $G/K'$ is equal to the order of $S$.  Since $S$ is a Sylow subgroup $G/K'\cong S$, and the inclusion $S\into G$ therefore has a right inverse $r:G\to S$.  This shows that $r^\ast: H^\ast(S)\to H^\ast(G)$ is injective.  It is a simple consequence of the universal coefficients theorem (see Lemma \ref{H2surj} in Section \ref{lpandcups} below) that $r^\ast: H^2(S)\to H^2(G)$ is an isomorphism, and it follows from naturality of the cup product that the cup product pairing on $H^2(S)$ is isomorphic to the cup product pairing on $H^2(G)$.  Applying Theorem \ref{mainthm}, the cup product pairing on $H^2(S)$ is therefore isomorphic to the linking form on $H_1(M)_{\textrm{Tors}}$.

We now examine the possibilities for the group $S$.  If $S$ is abelian, then $S\cong \mathbb{Z}/2\oplus\mathbb{Z}/2$ and the cup-product pairing $H^2(S)\otimes H^2(S)\to H^4(S)$ has 3-dimensional image by the K\"unneth theorem.  We may therefore assume that $S$ is non-abelian.   Since $H_1(S)\cong\mathbb{Z}/2\oplus \mathbb{Z}/2$, $S$ is a $2$-group of maximal class (see \cite{Gor} section 5.4), and is therefore isomorphic to either a dihedral group, a semi-dihedral group, or a quaternion group.  The cup-product pairing on the second cohomology of a dihedral group of order $4k$ has image with rank larger than one (see \cite{Han}), and is degenerate on any semi-dihedral group see (\cite{EP}).  It follows that $S$ is isomorphic to $Q_{2^k}$, a generalized quaternion group of order $2^k$.  

The cohomology ring $H^\ast(Q_8)$ of the quaternion group of order 8 has the feature that any $\alpha \in H^2(Q_8)$ satisfies $\alpha^2=0$  (see \cite{Ati}) , whereas the cohomology ring of $H^\ast(Q_{2^k})$ for $k>3$ has 2-dimensional elements with non-trivial squares (see \cite{HS}).  Since the cup product pairing on $H^2(S)$ is isomorphic to the linking pairing on $H_1(M)_{\textrm{Tors}}$, it follows that if $\langle c,c \rangle =0$ for all $c\in H_1(M)_{\textrm{Tors}}$ then $S\cong Q_8$, otherwise $S\cong Q_{2^k}$ for some $k>3$.

\end{proof}

Given a manifold $M$, let $\widetilde{M}_{ab}$ denote the maximal abelian cover of $M$.  It is interesting to note that there are only two isomorphism types of non-degenerate pairings $(\mathbb{Z}/2)^2\otimes (\mathbb{Z}/2)^2\to \mathbb{Q}/\mathbb{Z}$, and that both types appear as pairings on $H_1(M)_{\textrm{Tors}}$ for a closed orientable 3-manifold $M$ such that $b_1(\widetilde{M}_{ab})=0$.  Examples of such manifolds are given by the spaces $S^3/Q_8$ and $S^3/Q_{16}$, where $S^3$ is viewed as the group of unit quaternions and $Q_{2n}$ is realized as the subgroup of $S^3$ generated by $e^{i\pi/n}$ and $j$.  It has been shown by Kawauchi and Kojima in \cite{KK} that every non-degenerate pairing $A\otimes A\to \mathbb{Q}/\mathbb{Z}$ on a finite abelian group $A$ appears as the linking form of a 3-manifold with $b_1(M)=0$.  Given this result, it is interesting to ask the following:

\begin{Quest}  Given a finite abelian group $A$, does every non-degenerate bilinear pairings \linebreak $A\otimes A\to \mathbb{Q}/\mathbb{Z}$ appear as the linking pairing of a 3-manifold $M$ such that $H_1(M)=A$ and $b_1(\widetilde{M}_{ab})=0$? 
\end{Quest}

Note that by Theorem \ref{mainthm}, any pairing that appears in this way also appears as the cup product pairing on $H^2(G)$ for the finite metabelian group $G\cong \pi_1(M)/\pi_1(M)^{(2)}$.

The proof of Theorem \ref{mainthm} breaks into two parts.  The first part, carried out in section \ref{lpandcups}, consists of showing that the linking pairing on $H_1(M)$ can be factored through the cup-product pairing  $H^2(G)\otimes H^2(G)\to H^4(G)$.  One consequence of this factorization is the following theorem, which applies to quotients of $\pi_1(M)$ whose abelianization has maximal order.

\begin{thm}\label{features}  Let $\Gamma$ be the fundamental group of a closed, orientable 3-manifold $M$, let $[M]\in H_3(M)$ denote the fundamental class of $M$, and let $q:\Gamma\to G$ be a surjective homomorphism onto a finite group $G$ such that $\ker(q)\subseteq \Gamma^{(1)}$.  Then 
\begin{itemize}
\item[(i)]$H^2(G)\otimes H^2(G)\to H^4(G)$ is non-degenerate, and
\item [(ii)]$\ord(\tilde{q}_\ast([M]))\cdot H_1(M)=0$, where $\tilde{q}:M\to BG$ is a continuous map such that $\tilde{q}_\ast:\pi_1(M)\to \pi_1(BG)\cong G$ is equal to $q$.
\end{itemize}
\end{thm}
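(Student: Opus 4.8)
The plan is to deduce both statements from one structural fact: the homomorphism $\Phi\colon H^2(G)\to H_1(M)$ given by $\Phi(\omega)=[M]\frown\tilde{q}^\ast(\omega)$ is an isomorphism. I would establish this first. The hypothesis $\ker(q)\subseteq\Gamma^{(1)}$ forces $q$ to induce an isomorphism on abelianizations, so that $\tilde{q}_\ast\colon H_1(M)\to H_1(G)$ is an isomorphism. Since $G$ is finite, $H_1(M)\cong H_1(G)$ is finite, so $b_1(M)=0$ and $H_2(M)\cong H^1(M)=0$; in particular $H_2(M)$ and $H_2(G)$ are finite, so the $\Hom(H_2(-),\mathbb{Z})$ terms in the universal coefficients sequence vanish. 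Naturality of that sequence then identifies $\tilde{q}^\ast\colon H^2(G)\to H^2(M)$ with $\Ext(\tilde{q}_\ast,\mathbb{Z})$, which is an isomorphism; composing with the Poincar\'e duality isomorphism $[M]\frown-\colon H^2(M)\to H_1(M)$ shows that $\Phi$ is an isomorphism. This is the universal-coefficients argument of Lemma~\ref{H2surj}.

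For part (i) I would feed this into the factorization of the linking pairing obtained in Section~\ref{lpandcups}, together with the non-degeneracy of the linking form on a closed oriented $3$-manifold. That factorization supplies a homomorphism $\psi\colon H^4(G)\to\mathbb{Q}/\mathbb{Z}$ with $\psi(\omega_1\smile\omega_2)=\langle\Phi(\omega_1),\Phi(\omega_2)\rangle$ for all $\omega_1,\omega_2\in H^2(G)$. If $\omega_1$ lies in the radical of the cup-product pairing, then $\langle\Phi(\omega_1),\Phi(\omega_2)\rangle=0$ for every $\omega_2$; as $\Phi$ is onto, $\Phi(\omega_1)$ is then orthogonal to all of $H_1(M)$, so $\Phi(\omega_1)=0$ by non-degeneracy of the linking form, whence $\omega_1=0$ since $\Phi$ is injective. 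Thus the cup-product pairing has trivial radical, i.e.\ is non-degenerate.

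For part (ii) I would invoke the projection formula $\tilde{q}_\ast([M]\frown\tilde{q}^\ast(\omega))=\tilde{q}_\ast([M])\frown\omega$, which identifies the composite $\tilde{q}_\ast\circ\Phi\colon H^2(G)\to H_1(G)$ with the cap-product map $\tilde{q}_\ast([M])\frown-$. Since $\Phi$ and $\tilde{q}_\ast$ are isomorphisms, this cap-product map is surjective, so every element of $H_1(G)$ has the form $\tilde{q}_\ast([M])\frown\omega$. Writing $d=\ord(\tilde{q}_\ast([M]))$ and using bilinearity of the cap product, $d\cdot(\tilde{q}_\ast([M])\frown\omega)=(d\,\tilde{q}_\ast([M]))\frown\omega=0$, so $d\cdot H_1(G)=0$; transporting back along the isomorphism $\tilde{q}_\ast$ gives $d\cdot H_1(M)=0$.

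The substantive input is the factorization from Section~\ref{lpandcups}; granting it, the present theorem is essentially formal. Within the above argument the only point demanding care is the isomorphism $\Phi$, which rests on the vanishing of the $\Hom(H_2(-),\mathbb{Z})$ terms (equivalently on $H_1(M)$ being finite) and on using the naturality of the universal coefficients sequence rather than any non-natural splitting of it.
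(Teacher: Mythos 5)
Your proposal is correct, and its skeleton matches the paper's: both hinge on the fact that $\tilde{q}^\ast\colon H^2(G)\to H^2(M)$ is an isomorphism (the paper's Lemma~\ref{3mfldversion}, proved exactly by your universal-coefficients argument) and on the identity of Lemma~\ref{fundid} relating cup products in $H^\ast(G)$ to the linking pairing. The differences are in how the two parts are then extracted. For (i) you argue that the radical of the cup-product pairing maps into the radical of the linking form and invoke non-degeneracy of the latter; the paper instead uses the sharper Lemma~\ref{order}, producing for each $\omega_1$ a partner $\omega_2$ with $\ord(\lambda(\tilde q^\ast\omega_1,\tilde q^\ast\omega_2))=\ord(\omega_1)$ --- overkill for non-degeneracy alone, but it sets up part (ii). For (ii) your route is genuinely different and cleaner: you identify $\omega\mapsto\tilde{q}_\ast([M])\frown\omega$ as the composite of three isomorphisms via the projection formula, hence a surjection onto $H_1(G)$, and kill its image by bilinearity of the cap product; this avoids the Bockstein, the linking form, and Lemma~\ref{order} entirely. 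The paper instead tracks orders through the chain $\ord(\omega_1)=\ord(\lambda(\tilde q^\ast\omega_1,\eta))=\ord(\tilde q_\ast([M])\frown\beta^{-1}(\omega_1\smile\omega_2))\mid\ord(\tilde q_\ast([M]))$. What the paper's version buys is the explicit link between $\ord(\tilde q_\ast([M]))$ and the exponent of the image of the cup-product pairing, which is reused in the proof of Theorem~\ref{mainthm} to show the embedding $i\colon C\to\mathbb{Q}/\mathbb{Z}$ is injective; your version buys economy of means for the statement actually at hand.
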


Note that this theorem provides information about how the fundamental class of the manifold $M$ behaves under finite quotient maps.  Indeed, in the special case that $G\cong H_1(M)$, this theorem shows that the image of the homomorphism $\tilde{q}_\ast: H_3(M)\to H_3(G)$ has maximal order, since the annihilator of $H_3(G)$ is equal to the annihilator of $H_1(G)$ when $G$ is an abelian group.

The second part of the proof of Theorem \ref{mainthm}, carried out in section \ref{cyclicity}, establishes the following result using an argument known as spectral sequence comparison: 

\begin{lemma}\label{cupproducts} Let $\Gamma$ be the fundamental group of a closed, orientable 3-manifold $M$, and let $\rho:\pi_1(M)\to G$ be a surjective homomorphism. If $\ker(\rho)\subseteq \Gamma^{(2)}$, then the image of the cup-product pairing $H^2(G)\otimes H^2(G)\to H^4(G)$ is cyclic.
\end{lemma}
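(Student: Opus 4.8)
The plan is to prove cyclicity by comparing two spectral sequences associated to the abelianization $A := H_1(M) = \Gamma/\Gamma^{(1)}$. Since $\ker(\rho)\subseteq \Gamma^{(2)}\subseteq \Gamma^{(1)}$ we have $A = G/G^{(1)} = H_1(G)$, so $A$ is finite and hence $b_1(M)=0$; in particular the maximal abelian cover $\widetilde{M}_{ab}\to M$ is a finite regular cover with deck group $A$, so $\widetilde{M}_{ab}$ is a closed orientable $3$-manifold. On the one hand I take the Lyndon--Hochschild--Serre spectral sequence of $1\to G^{(1)}\to G\to A\to 1$, with $E_2^{p,q}(G)=H^p(A;H^q(G^{(1)}))\Rightarrow H^{p+q}(G)$; on the other hand I take the Serre spectral sequence of the fibration $\widetilde{M}_{ab}\to M\to BA$, with $E_2^{p,q}(M)=H^p(A;H^q(\widetilde{M}_{ab}))\Rightarrow H^{p+q}(M)$. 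The map $\tilde q:M\to BG$ covers the identity on $BA$ and therefore induces a map of spectral sequences $E_r^{p,q}(G)\to E_r^{p,q}(M)$, realized on fibers by the classifying map $f:\widetilde{M}_{ab}\to BG^{(1)}$ of the composite $\pi_1(\widetilde{M}_{ab})=\Gamma^{(1)}\to G^{(1)}$.

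The single place where the hypothesis $\ker(\rho)\subseteq\Gamma^{(2)}$ enters is the fiber comparison in low degrees. Indeed $H_1(\widetilde{M}_{ab})=\Gamma^{(1)}/\Gamma^{(2)}$, while $H_1(G^{(1)})=G^{(1)}/G^{(2)}=\Gamma^{(1)}/\Gamma^{(2)}$ precisely because $\ker(\rho)\subseteq\Gamma^{(2)}$, and $f$ induces the identity between them. Since $\widetilde{M}_{ab}$ is a closed orientable rational homology sphere and $G^{(1)}$ is finite, the universal coefficient theorem gives $H^2(G^{(1)})\cong \Ext(H_1(G^{(1)}),\mathbb{Z})\cong \Ext(H_1(\widetilde{M}_{ab}),\mathbb{Z})\cong H^2(\widetilde{M}_{ab})$ (the $\Hom(H_2,\mathbb{Z})$ summands vanish on both sides), and $H^1$ vanishes on both sides. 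Tracking naturality of these identifications as $A$-modules, I conclude that $f^\ast$ is an isomorphism of $A$-modules in degrees $q\le 2$, and hence that $E_2^{p,q}(G)\to E_2^{p,q}(M)$ is an isomorphism for all $p$ when $q\le 2$.

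Next I show that every class in $H^2(G)$ is pulled back from $A$. For $M$, the universal coefficient theorem gives $H^2(M)=\Ext(H_1(M),\mathbb{Z})=\Ext(A,\mathbb{Z})$, which is exactly the image of the edge homomorphism $H^2(A)\to H^2(M)$; therefore $E_\infty^{0,2}(M)=0$, i.e. $d_3\colon E_3^{0,2}(M)\to E_3^{3,0}(M)=H^3(A)$ is injective. Because the comparison map is an isomorphism on $E_2^{0,2}$ (degree $q=2$) and on $E_2^{3,0}=H^3(A)$ (degree $q=0$) and commutes with $d_3$, the differential $d_3\colon E_3^{0,2}(G)\to H^3(A)$ is injective as well, so $E_\infty^{0,2}(G)=0$. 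As $E_2^{1,1}(G)=H^1(A;H^1(G^{(1)}))=0$, this forces $H^2(G)=E_\infty^{2,0}(G)=\operatorname{im}\big(\pi^\ast\colon H^2(A)\to H^2(G)\big)$, where $\pi\colon G\to A$. Consequently, for any $\omega_1,\omega_2\in H^2(G)$, writing $\omega_i=\pi^\ast a_i$ we get $\omega_1\smile\omega_2=\pi^\ast(a_1\smile a_2)\in \operatorname{im}\big(\pi^\ast\colon H^4(A)\to H^4(G)\big)=E_\infty^{4,0}(G)$, so the entire image of the cup-product pairing lies in $E_\infty^{4,0}(G)$.

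It remains to show $E_\infty^{4,0}(G)$ is cyclic. The only differentials landing in the $(4,0)$ slot are $d_3\colon E_3^{1,2}\to E_3^{4,0}$ and $d_4\colon E_4^{0,3}\to E_4^{4,0}$ (the $d_2$ source $E_2^{2,1}$ vanishes since $H^1(G^{(1)})=0$), so $E_\infty^{4,0}(G)=H^4(A)/\big(\operatorname{im}d_3^{1,2}+\operatorname{im}d_4^{0,3}\big)$. Since the comparison is an isomorphism on $E_3^{1,2}$ and $E_3^{4,0}$, the subgroups $\operatorname{im}d_3^{1,2}$ agree for $G$ and for $M$; call the common quotient $Q:=H^4(A)/\operatorname{im}(d_3^{1,2})=E_4^{4,0}(G)=E_4^{4,0}(M)$. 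For $M$, the vanishing $H^4(M)=0$ gives $E_\infty^{4,0}(M)=0$, so $d_4^{0,3}\colon E_4^{0,3}(M)\to Q$ is surjective; but $E_4^{0,3}(M)$ is a subquotient of $E_2^{0,3}(M)=H^3(\widetilde{M}_{ab})^A\cong\mathbb{Z}$ (the deck group acts trivially on the top cohomology of the oriented cover), hence cyclic. Therefore $Q$ is cyclic, and $E_\infty^{4,0}(G)$, being a quotient of $Q$, is cyclic; as it contains $\operatorname{im}(\smile)$, the lemma follows. I expect the main obstacle to be the careful bookkeeping in this last step—verifying equivariance of the fiber isomorphism in degrees $\le 2$ (including triviality of the $A$-action on $H^3(\widetilde{M}_{ab})$) and confirming that only $d_3$ and $d_4$ feed the $(4,0)$ slot—since it is exactly the combination of the rank-one group $H^3(\widetilde{M}_{ab})^A$ with the vanishing $H^{\ge 4}(\widetilde{M}_{ab})=0$ that collapses $E_\infty^{4,0}$ to a cyclic group.
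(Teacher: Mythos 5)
Your proof is correct and follows essentially the same route as the paper: you compare the Lyndon--Hochschild--Serre spectral sequence of $1\to G^{(1)}\to G\to H_1(M)\to 1$ with the spectral sequence of the maximal abelian cover, use $\ker(\rho)\subseteq\Gamma^{(2)}$ to get an isomorphism of $E_2^{p,q}$-terms for $q\le 2$, and deduce cyclicity of $E_\infty^{4,0}(G)$ from $H^4(M)=0$ together with $E_2^{0,3}(M)\cong\mathbb{Z}$. The only differences are cosmetic: you obtain the surjectivity of $H^2(H_1(M))\to H^2(G)$ inside the spectral sequence rather than via the universal-coefficients argument of Lemma \ref{H2surj}, and you identify $E_4^{4,0}(G)\cong E_4^{4,0}(M)$ directly from the isomorphisms on $E_3^{1,2}$ and $E_3^{4,0}$ without first showing $E_4^{1,2}(M)=0$.
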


As we will show in section 4, Theorem \ref{mainthm} follows easily from Lemma \ref{cupproducts} together with the relationship between the cup-product pairing and the linking pairing established in section \ref{lpandcups}.

\section{The linking pairing on $H_1(M)_{\textrm{Tors}}$ and cup products in quotients of $\pi_1(M)$ with maximal abelianization}\label{lpandcups}

Throughout this paper, we will let $M$ be a closed orientable 3-manifold, we will let $\Gamma$ denote $\pi_1(M)$, $\langle-,-\rangle$ denote the linking pairing on $H_1(M)_{\textrm{Tors}}$, and $[M]\in H_3(M)$ denote the fundamental class of $M$.  For functoriality reasons, we will regard the image of the linking pairing $\langle-,- \rangle$ as an element of $H_0(M,\mathbb{Q}/\mathbb{Z})$, rather than as an element of the abstract group $\mathbb{Q}/\mathbb{Z}$.  It will also be convenient to work the dual pairing $\lambda: H^2(M)_{\textrm{Tors}}\otimes H^2(M)_{\textrm{Tors}}\to H_0(M,\mathbb{Q}/\mathbb{Z})$, defined by $\lambda(\omega_1, \omega_2)= \langle[M]\frown \omega_1, [M]\frown \omega_2 \rangle$.  This pairing satisfies the following well-known identity:
\begin{equation}\label{lambda}
\lambda(\omega_1,\omega_2)=[M]\frown \left(\omega_1\smile \beta^{-1}(\omega_2)\right),
\end{equation}
where $\beta: H^1(M,\mathbb{Q}/\mathbb{Z})\to H^2(M,\mathbb{Z})$ denotes the Bockstein homomorphism arising from the short-exact sequence $0\to \mathbb{Z}\to \mathbb{Q}\to \mathbb{Q}/\mathbb{Z}\to 0$.

The following lemma shows how the linking pairing on $H_1(M)_{\textrm{Tors}}$ relates to the cup product pairing $H^2(G)\otimes H^2(G)\to H^4(G)$.

\begin{lemma}\label{fundid} Let $q: \Gamma\to G$ be a surjective homomorphism onto a finite group $G$, and let $\tilde{q}: M\to BG$ be a continuous map from $M$ to the classifying space of $G$ such that $\tilde{q}_\ast:\pi_1(M)\to \pi_1(BG)\cong G$ equals $q$.  Given $\omega_1, \omega_2\in H^2(G)$, 
\[
\tilde{q}_\ast\left(\lambda( \tilde{q}^\ast(\omega_1), \tilde{q}^\ast(\omega_2))\right)= \tilde{q}_\ast([M])\frown \beta^{-1}(\omega_1\smile \omega_2). 
\]
\end{lemma}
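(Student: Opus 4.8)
The plan is to begin from the left-hand side, unwind the definition of $\lambda$ through the identity \eqref{lambda}, and then transport the whole expression from $M$ to $BG$ using naturality of the Bockstein and of the cup product together with the projection formula for cap products. The only genuinely non-formal ingredient will be a Leibniz-type identity for the Bockstein on $BG$, which is where the integrality of $\omega_1$ is used; everything else is bookkeeping with natural operations.

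First I would note that since $G$ is finite, $H^2(G)$ is torsion, so $\tilde{q}^\ast\omega_i\in H^2(M)_{\textrm{Tors}}$ and $\lambda$ applies. Applying \eqref{lambda},
\[
\tilde{q}_\ast\!\left(\lambda(\tilde{q}^\ast\omega_1,\tilde{q}^\ast\omega_2)\right)
=\tilde{q}_\ast\!\left([M]\frown\big(\tilde{q}^\ast\omega_1\smile\beta_M^{-1}(\tilde{q}^\ast\omega_2)\big)\right),
\]
where $\beta_M$ is the Bockstein on $M$. Because $G$ is finite, $H^\ast(BG;\mathbb{Q})$ vanishes in positive degrees, so $\beta_{BG}\colon H^1(BG;\mathbb{Q}/\mathbb{Z})\to H^2(BG;\mathbb{Z})$ is an isomorphism and $\beta_{BG}^{-1}(\omega_2)$ is well defined. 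Naturality of the Bockstein gives $\beta_M\big(\tilde{q}^\ast\beta_{BG}^{-1}(\omega_2)\big)=\tilde{q}^\ast(\omega_2)$, so $\tilde{q}^\ast\beta_{BG}^{-1}(\omega_2)$ is a legitimate Bockstein preimage on $M$; since \eqref{lambda} is independent of the chosen preimage, I may substitute it. Naturality of the cup product then lets me pull the product inside $\tilde{q}^\ast$, and the projection formula $\tilde{q}_\ast(a\frown\tilde{q}^\ast\theta)=\tilde{q}_\ast(a)\frown\theta$ pushes the cap product forward, yielding
\[
\tilde{q}_\ast([M])\frown\big(\omega_1\smile\beta_{BG}^{-1}(\omega_2)\big).
\]

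It then remains to identify $\omega_1\smile\beta_{BG}^{-1}(\omega_2)$ with $\beta_{BG}^{-1}(\omega_1\smile\omega_2)$ in $H^3(BG;\mathbb{Q}/\mathbb{Z})$, which I expect to be the main obstacle. I would establish it by applying $\beta_{BG}$ and invoking the Leibniz property of the Bockstein: for an integral class $\omega_1\in H^2(BG;\mathbb{Z})$ and a $\mathbb{Q}/\mathbb{Z}$-class $y$ one has $\beta_{BG}(\omega_1\smile y)=(-1)^{|\omega_1|}\,\omega_1\smile\beta_{BG}(y)$, with no term differentiating $\omega_1$ since $\omega_1$, being integral, admits a $\mathbb{Q}$-coefficient lift. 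Taking $y=\beta_{BG}^{-1}(\omega_2)$ and noting $(-1)^{|\omega_1|}=1$ gives $\beta_{BG}(\omega_1\smile\beta_{BG}^{-1}(\omega_2))=\omega_1\smile\omega_2$; and since $\beta_{BG}$ is injective on $H^3(BG;\mathbb{Q}/\mathbb{Z})$ (again because $H^3(BG;\mathbb{Q})=0$), this forces the desired equality and completes the proof. The two care points are therefore (a) justifying the substitution of Bockstein preimages, which rests on the independence of \eqref{lambda} from the chosen preimage, and (b) pinning down the Leibniz formula with the correct coefficient pairings $\mathbb{Z}\otimes\mathbb{Q}/\mathbb{Z}\to\mathbb{Q}/\mathbb{Z}$ and $\mathbb{Z}\otimes\mathbb{Z}\to\mathbb{Z}$ and the correct sign.
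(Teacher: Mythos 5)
Your proposal is correct and follows essentially the same route as the paper: apply the identity \eqref{lambda}, use naturality of the Bockstein and of the cup product together with the projection formula to push everything down to $BG$, and then identify $\omega_1\smile\beta^{-1}(\omega_2)$ with $\beta^{-1}(\omega_1\smile\omega_2)$ via the fact that the Bockstein commutes with multiplication by integral classes (your ``Leibniz rule with no term differentiating the integral factor''). The only cosmetic differences are that the paper justifies this last identity by citing that connecting homomorphisms are $H^\ast(G)$-module maps (Brown, Ch.~V.3) rather than by your $\mathbb{Q}$-lift argument, and that it does not explicitly verify independence of the choice of Bockstein preimage on $M$, a point you rightly flag.
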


\begin{proof}  We begin by noting that the expressions on the left-hand side of the above identity are well-defined, since $H^2(G)$ is a finite groups and therefore $\tilde{q}^\ast(\omega)\in H^2(M)_{\textrm{Tors}}$.  Note also that since $G$ is finite $H^i(G,\mathbb{Q})=0$ for all $i>0$.  It follows that $\beta: H^1(G,\mathbb{Q}/\mathbb{Z})\to H^2(G,\mathbb{Z})$ is an isomorphism and therefore the map $\beta^{-1}$ appearing on the right-hand side of the above equation is well-defined as well.

Recall that the cup product pairing $H^i(G,A)\otimes H^j(G,\mathbb{Z})\to H^i(G,A\otimes \mathbb{Z})\cong H^i(G,A)$ equips $H^\ast(G,A)$ with the structure of a right $H^\ast(G)$-module.  Given a short exact sequence $0\to A\to B\to C\to 0$ of $G$-modules, the connecting homomorphisms in the long exact sequence
\[
\dots \to H^i(G, A)\to H^i(G,B)\to H^i(G,C)\stackrel{\delta}{\to} H^{i+1}(G,A)\to \dots
\] 
fit together to give an $H^\ast(G)$-module homomorphism $\delta: H^\ast(G,C)\to H^\ast(G,A)$, i.e. given $\alpha\in H^\ast(G,C)$ and $\omega\in H^\ast(G)$, $\delta(\alpha )\smile \omega =\delta(\alpha\smile \omega)$ (see \cite{Bro} Chapter V.3).  Since the Bockstein homomorphism $\beta: H^\ast(G,\mathbb{Q}/\mathbb{Z})\to H^\ast(G,\mathbb{Z})$ is given by the connecting homomorphism in the long exact sequence in cohomology arising from the short exact sequence $0\to \mathbb{Z} \to \mathbb{Q}\to \mathbb{Q}/\mathbb{Z}\to 0$, given $\omega\in H^1(G,\mathbb{Q}/\mathbb{Z})$ and $\eta\in H^1(G,\mathbb{Z})$, $\beta(\omega)\smile \eta= \beta(\omega\smile \eta)$.  It follows that given $\eta_1, \eta_2\in H^2(G,\mathbb{Z})$, $\eta_1\smile \eta_2=\beta(\beta^{-1}(\eta_1)\smile \eta_2)$, and therefore
\begin{equation}\label{cupeqn}
\beta^{-1}(\eta_1\smile \eta_2)= \beta^{-1}(\eta_1)\smile \eta_2.
\end{equation}
Recall that for a continuous map $f:X\to Y$ between topological spaces, the cap product satisfies the following naturality property for $c\in H_i(X), \eta\in H^j(Y)$:
\begin{equation}\label{capid}
f_\ast(c \frown f^\ast(\eta))=f_\ast(c)\frown \eta. 
\end{equation}
Applying these identities together with the identity (\ref{lambda}) for the pairing $\lambda$ and naturality of the cup product and the Bockstein homomorphism, we obtain
\[
\tilde{q}_\ast( \lambda \left(\tilde{q}^\ast(\omega_1), \tilde{q}^\ast(\omega_2)\right))
\overset{(\ref{lambda})}{=}
\tilde{q}_\ast\left([M]\frown (\tilde{q}^\ast(\omega_1)\smile \beta^{-1}(\tilde{q}^\ast(\omega_2)))\right)~~~~~~~
\]
\[
=\tilde{q}_\ast\left([M]\frown (\tilde{q}^\ast(\omega_1)\smile \tilde{q}^\ast(\beta^{-1}(\omega_2)))\right)
=\tilde{q}_\ast\left([M]\frown \tilde{q}^\ast(\omega_1 \smile \beta^{-1}(\omega_2))\right)
\]
\[
\overset{(\ref{capid})}{=}\tilde{q}_\ast([M])\frown (\omega_1 \smile \beta^{-1}(\omega_2))
\overset{(\ref{cupeqn})}{=} \tilde{q}_\ast([M])\frown \beta^{-1}(\omega_1\smile \omega_2).~~~~~~~~~~~~~~~~
\]

\end{proof}

We now turn to the proof of Theorem \ref{features}, which will require several preliminary lemmas.

\begin{lemma}\label{H2surj}  Let $f:X\to Y$ be a continuous map between topological spaces such that $H_2(X)$ and $H_2(Y)$ are finite.  If $f_\ast: H_1(X)\to H_1(Y)$ is an isomorphism, then $f^\ast: H^2(Y)\to H^2(X)$ is an isomorphism.
\end{lemma}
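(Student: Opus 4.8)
The plan is to reduce the statement to a computation with the Universal Coefficients Theorem for cohomology, which for any space $Z$ furnishes a natural short exact sequence
\[
0\to \Ext(H_1(Z),\mathbb{Z})\to H^2(Z)\to \Hom(H_2(Z),\mathbb{Z})\to 0.
\]
The crucial observation is that since $H_2(X)$ and $H_2(Y)$ are finite, both groups $\Hom(H_2(X),\mathbb{Z})$ and $\Hom(H_2(Y),\mathbb{Z})$ vanish, as there are no nonzero homomorphisms from a finite group into the torsion-free group $\mathbb{Z}$. The left-hand inclusions in the sequences for $X$ and for $Y$ are therefore isomorphisms, giving canonical identifications $H^2(X)\cong \Ext(H_1(X),\mathbb{Z})$ and $H^2(Y)\cong \Ext(H_1(Y),\mathbb{Z})$.

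Next I would exploit the naturality of the Universal Coefficients sequence with respect to the map $f$. This yields a commutative square whose right-hand vertical arrow is $f^\ast:H^2(Y)\to H^2(X)$, whose left-hand vertical arrow is $\Ext(f_\ast,\mathbb{Z}):\Ext(H_1(Y),\mathbb{Z})\to \Ext(H_1(X),\mathbb{Z})$, and whose horizontal arrows are the two isomorphisms identified above. Since $f_\ast:H_1(X)\to H_1(Y)$ is an isomorphism by hypothesis and $\Ext(-,\mathbb{Z})$ is a functor, the induced map $\Ext(f_\ast,\mathbb{Z})$ is an isomorphism. Commutativity of the square then forces $f^\ast$ to be an isomorphism, which is the desired conclusion.

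There is essentially no deep obstacle in this argument; it is a direct application of the Universal Coefficients Theorem together with the functoriality of $\Ext$. The only point that warrants a moment of care is the naturality invoked in the second step: while the \emph{splitting} of the Universal Coefficients sequence is not natural, the short exact sequence itself \emph{is} natural in the space, and that is all that is needed to assemble the commutative square. In fact, one can sidestep splittings entirely by noting that the vanishing of the $\Hom$ terms collapses each sequence to a single isomorphism onto its $\Ext$ term, so that naturality is only ever applied to the map $\Ext(f_\ast,\mathbb{Z})$ between these $\Ext$ groups.
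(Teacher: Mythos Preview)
Your proof is correct and follows essentially the same approach as the paper: both invoke the natural Universal Coefficients short exact sequence, observe that the $\Hom$ terms vanish because $H_2(X)$ and $H_2(Y)$ are finite, and conclude by functoriality of $\Ext(-,\mathbb{Z})$ applied to the isomorphism $f_\ast$ on $H_1$. Your additional remark distinguishing naturality of the sequence from naturality of the splitting is a useful clarification, but the core argument is identical to the paper's.
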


\begin{proof}
By naturality of the universal coefficients exact sequence, we have the following commutative diagram of exact sequences:
 \[
\begin{array}[c]{ccccccccc}
0\to \Ext(H_1(Y),\mathbb{Z}) \to H^2(Y) \to \Hom(H_2(Y),\mathbb{Z}) \to 0\\
~~~~~~~~\downarrow\mbox{\scriptsize $f^\ast$ }~~~~~~~~~~~~ \downarrow  \mbox{\scriptsize $f^\ast$}~~~~~~~~~~~~~~\downarrow\mbox{\scriptsize $f^\ast$ } \\
0\to \Ext(H_1(X),\mathbb{Z}) \to H^2(X) \to \Hom(H_2(X),\mathbb{Z}) \to 0\\.
\end{array}
\]
Since $H_2(Y)$ and $H_2(X)$ are finite, the $\Hom$ terms in the above diagram are trivial and therefore $f^\ast: H^2(Y)\to H^2(X)$ is completely determined by $f^\ast: \Ext(H_1(Y),\mathbb{Z})\to \Ext(H_1(X),\mathbb{Z})$.  If $f_\ast: H_1(Y)\to H_1(X)$ is an isomorphism, then $f^\ast: \Ext(H_1(Y), \mathbb{Z})\to \Ext(H_1(X),\mathbb{Z})$ is an isomorphism by functoriality, so the lemma follows.

\end{proof}

Note that since finite groups have finite second homology groups, Lemma \ref{H2surj} can be applied to any homomorphism between finite groups that induces an isomorphism on the level of abelianizations.  We will use the following simple consequence of this lemma several times in what follows.

\begin{lemma}\label{3mfldversion}  Let $M$ be a rational homology 3-sphere,  let $q: \pi_1(M)\to G$ be a surjective homomorphism onto a finite group, and let $\tilde{q}: M\to BG$ be a continuous homomorphism from $M$ to the classifying space of $G$ such that $\tilde{q}_\ast:\pi_1(M)\to \pi_1(BG)\cong G$ is equal to $q$.  If $\ker(q)\subseteq \pi_1(M)^{(1)}$, then $\tilde{q}^\ast: H^2(G)\to H^2(M)$ is an isomorphism.  
\end{lemma}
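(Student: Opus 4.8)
We must show that if $M$ is a rational homology 3-sphere, $q:\pi_1(M)\to G$ is a surjective homomorphism onto a finite group with $\ker(q)\subseteq \pi_1(M)^{(1)}$, then $\tilde{q}^*: H^2(G)\to H^2(M)$ is an isomorphism.

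**Setup.** We have Lemma \ref{H2surj} which says: if $f:X\to Y$ is continuous, $H_2(X), H_2(Y)$ are finite, and $f_*: H_1(X)\to H_1(Y)$ is an isomorphism, then $f^*: H^2(Y)\to H^2(X)$ is an isomorphism.

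Wait, let me re-read. The lemma statement says $f_*: H_1(X)\to H_1(Y)$ iso implies $f^*: H^2(Y)\to H^2(X)$ iso. But the proof text refers to $f^*: H^2(Y)\to H^2(X)$ and $f_*: H_1(Y)\to H_1(X)$ (with a swap). There's an inconsistency in the source, but morally: an isomorphism on $H_1$ (in the direction of $f$) plus finiteness of $H_2$'s gives an isomorphism on $H^2$ (in the direction of $f^*$).

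**Plan.** I want to apply Lemma \ref{H2surj} to the map $\tilde{q}: M \to BG$.

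The plan is to apply Lemma \ref{H2surj} directly to the map $f = \tilde{q}: M \to BG$. Two things need checking: that the relevant second homology groups are finite, and that $\tilde{q}$ induces an isomorphism on first homology.

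First I would verify finiteness of $H_2$. Since $M$ is a rational homology 3-sphere, $H_2(M;\mathbb{Z})$ is finite (in fact, by Poincar\'e duality $H_2(M)\cong H^1(M)\cong \Hom(H_1(M),\mathbb{Z})\oplus \Ext(H_0(M),\mathbb{Z})$, and with $b_1(M)=0$ this is torsion, hence finite as $M$ is compact). On the other side, $BG$ is the classifying space of a finite group $G$, and the homology of a finite group is finite in every positive degree; in particular $H_2(BG)=H_2(G)$ is finite.

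Next I would verify the $H_1$ isomorphism. The map $\tilde{q}_*$ on $\pi_1$ is $q$ by hypothesis, so on $H_1$ it is the abelianization $q^{\mathrm{ab}}: H_1(M)\cong \Gamma^{\mathrm{ab}} \to G^{\mathrm{ab}}\cong H_1(BG)$. The hypothesis $\ker(q)\subseteq \Gamma^{(1)}$ is exactly what forces this to be an isomorphism: surjectivity of $q$ gives surjectivity of $q^{\mathrm{ab}}$, while $\ker(q)\subseteq \Gamma^{(1)}$ means the kernel of $q$ dies in the abelianization, so $q^{\mathrm{ab}}$ is injective. Concretely, $q$ induces a surjection $\Gamma/\Gamma^{(1)}\to G/G^{(1)}$ whose kernel is $\ker(q)\cdot\Gamma^{(1)}/\Gamma^{(1)}$, which is trivial precisely because $\ker(q)\subseteq\Gamma^{(1)}$. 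Hence $\tilde{q}_*: H_1(M)\to H_1(BG)$ is an isomorphism.

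**Expected obstacle.** I don't expect a serious obstacle here — this is a packaging lemma. The only subtlety is bookkeeping around variance: making sure the hypotheses of Lemma \ref{H2surj} are invoked in the correct direction (the map $f=\tilde{q}$ going from $M$ to $BG$, so that the induced $f^*$ goes $H^2(BG)\to H^2(M)$ as desired, and the $H_1$ condition is stated for $f_*$ in the same direction). Once finiteness of both $H_2$'s and the $H_1$-isomorphism are in hand, Lemma \ref{H2surj} immediately yields that $\tilde{q}^*: H^2(G)=H^2(BG)\to H^2(M)$ is an isomorphism, completing the proof.
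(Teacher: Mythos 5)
Your proof is correct and follows essentially the same route as the paper: verify the hypotheses of Lemma \ref{H2surj} for $f=\tilde{q}$ by showing $H_2(M)$ is finite via Poincar\'e duality (since $H_1(M)$ is torsion) and that $\ker(q)\subseteq\Gamma^{(1)}$ forces $q$ to induce an isomorphism on abelianizations. Your side remark about the variance mismatch between the statement and proof of Lemma \ref{H2surj} is a fair catch, but it does not affect the argument.
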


\begin{proof}  We claim that the hypotheses of Lemma \ref{H2surj} hold in this setting.  To see that $q_\ast:H_1(M)\to H_1(G)$ is an isomorphism, note that since $\ker(q)\subseteq \Gamma^{(1)}$, the abelianization map $\Gamma\to \Gamma/\Gamma^{(1)}\cong H_1(M)$ factors through $q$.  It follows that the homomorphism $\tilde{q}_\ast:H_1(M)\to H_1(G)$ is injective.  Since the map $q$ is surjective, the induced map is $\tilde{q}_\ast: H_1(M)\to H_1(G)$ is surjective as well.

Since $H_2(G)$ is finite for any finite group $G$, it remains to check that $H_2(M)$ is finite.  This is a simple consequence of Poincar\'e duality and the universal coefficients theorem, since $H_2(M)\cong H^1(M)\cong \Hom(H_1(M),\mathbb{Z})$, and $\Hom(H_1(M),\mathbb{Z})=0$ since $H_1(M)$ is a torsion group.

\end{proof}

 The next lemma we will need is the following well-known result on the values taken by the linking form.

\begin{lemma}\label{order}  Let $M$ be a 3-manifold.  Given $a \in H^2(M)_{\textrm{Tors}}$, there exists $b \in H^2(M)_{\textrm{Tors}}$ such that $\ord(\lambda(\omega,\eta))=\ord(\omega)$.
\end{lemma}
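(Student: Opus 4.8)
The plan is to deduce the lemma from the non-degeneracy of the linking pairing together with the fact that every finite subgroup of $\mathbb{Q}/\mathbb{Z}$ is cyclic. I read the statement as asserting that for each $a\in H^2(M)_{\textrm{Tors}}$ there exists $b\in H^2(M)_{\textrm{Tors}}$ with $\ord(\lambda(a,b))=\ord(a)$.

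First I would record that $\lambda$ is non-degenerate. By definition $\lambda(\omega_1,\omega_2)=\langle [M]\frown\omega_1,\,[M]\frown\omega_2\rangle$, and Poincar\'e duality furnishes the isomorphism $[M]\frown -\colon H^2(M)_{\textrm{Tors}}\to H_1(M)_{\textrm{Tors}}$; thus $\lambda$ is the transport of the linking pairing along an isomorphism, and inherits its non-degeneracy. Writing $A=H^2(M)_{\textrm{Tors}}$ and identifying $H_0(M,\mathbb{Q}/\mathbb{Z})\cong\mathbb{Q}/\mathbb{Z}$, non-degeneracy says precisely that the adjoint homomorphism $A\to\Hom(A,\mathbb{Q}/\mathbb{Z})$, $a\mapsto\phi_a:=\lambda(a,-)$, is injective.

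The remaining argument is purely group-theoretic. Fix $a$ and set $n=\ord(a)$. An injective homomorphism preserves orders, so $\phi_a$ has order $n$ in $\Hom(A,\mathbb{Q}/\mathbb{Z})$. The order of a homomorphism $\phi_a\colon A\to\mathbb{Q}/\mathbb{Z}$ is exactly the exponent of its image, i.e. the least $k\ge 1$ with $k\cdot\phi_a(b)=0$ for all $b\in A$; hence $\mathrm{im}(\phi_a)$ has exponent $n$. But $\mathrm{im}(\phi_a)$ is a finite subgroup of $\mathbb{Q}/\mathbb{Z}$ and is therefore cyclic, and for a finite cyclic group the exponent equals the maximal order of an element. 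Consequently $\mathrm{im}(\phi_a)$ contains an element of order $n$, so there is some $b\in A$ with $\ord(\lambda(a,b))=\ord(\phi_a(b))=n=\ord(a)$.

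The only point requiring care is the step from non-degeneracy---which a priori only guarantees that $\lambda(a,b)\ne 0$ for some $b$---to the sharper conclusion that $\lambda(a,b)$ realizes the full order $\ord(a)$. This is where I combine order-preservation under the injective adjoint with the cyclicity of finite subgroups of $\mathbb{Q}/\mathbb{Z}$; for a non-degenerate pairing valued in a non-cyclic target the conclusion could genuinely fail, so both ingredients are needed. Beyond this bookkeeping I anticipate no real obstacle, since the non-degeneracy of the linking form on a closed orientable 3-manifold is classical.
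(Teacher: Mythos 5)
Your proof is correct, and it closes the argument by a genuinely different (and cleaner) route than the paper. Both proofs share the same first step: non-degeneracy of $\lambda$ makes the adjoint $a\mapsto \lambda(a,-)$ an injective homomorphism $A\to \Hom(A,\mathbb{Q}/\mathbb{Z})$, hence order-preserving, so $\lambda(a,-)$ has order $\ord(a)$ as an element of $\Hom(A,\mathbb{Q}/\mathbb{Z})$, i.e.\ the image of $\lambda(a,-)$ has exponent $\ord(a)$. From there the paper proceeds prime by prime: it introduces the integers $\mu_b=\ord(a)/\ord(\lambda(a,b))$, shows their gcd is $1$, finds for each prime $p\mid\ord(a)$ an element $b_p$ realizing exactly the $p$-part of $\ord(a)$, and sums the $b_p$ using that elements of pairwise coprime orders add to an element of product order. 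You instead observe that the image of $\lambda(a,-)$ is a finite subgroup of $\mathbb{Q}/\mathbb{Z}$, hence cyclic, so its exponent is attained by a single element; this collapses the paper's entire prime-decomposition bookkeeping into one line and is where your argument gains its economy. The only small point worth making explicit is why the image is finite: either because $A=H^2(M)_{\textrm{Tors}}$ is finite for a closed $3$-manifold, or more intrinsically because the image is annihilated by $n=\ord(a)$ and so lies in the cyclic group $\tfrac{1}{n}\mathbb{Z}/\mathbb{Z}$. You are also right that the cyclicity of the target is essential and not a formality, and your reading of the statement (whose $\omega,\eta$ should be $a,b$) matches the intended one.
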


\begin{proof}  Let $A$ denote $H^2(M)_{\textrm{Tors}}$.  It is a well-known consequence of Poincar\'e duality that $\lambda :A\times A\to \mathbb{Q}/\mathbb{Z}$ is non-degenerate, so the homomorphism $A\to \Hom(A,\mathbb{Q}/\mathbb{Z})$ given by $a\mapsto \lambda(a,-)$ is injective.  Given $b\in A$, let $\mu_b=\ord(a)/\ord(\lambda (a,b))$.  Let $d$ be the greatest commont divisor of $\{\mu_b~|~b\in A\}$.  Then 
\[
\frac{\ord(a)}{d}\cdot \lambda(a,b)= \frac{\mu_b\cdot \ord(\lambda(a,b))}{d}\cdot \lambda (a,b)= \frac{\mu_b}{d}\cdot \ord(\lambda(a,b))\cdot \lambda(a,b)=0
\]
for all $b$, so $(\ord(a)/d)\cdot \lambda(a,-)=0$ and therefore  $\ord(\lambda( a,-))$ divides $\ord(a)/d$. Since $a\mapsto \lambda(a,-)$ is an isomorphism, $\ord(\lambda(a,-))=\ord(a)$ so $d=\pm 1$.  It follows that for each prime $p$ dividing $\ord(a)$, there exists an element $b_p$ such that $\mu_{b_p}$ is coprime to $p$ and hence the $p$-part of $\ord(a)$ divides $\ord(\lambda(a,b))$.  By taking a multiple of $b_p$ if necessary, we can assume that $\ord(\lambda(a,b_p))$ is exactly equal to the $p$-part of $\ord(a)$.  Since the sum of a set of elements with pairwise coprime orders $n_1,n_2, \dots , n_\ell$, in an abelian group has order given by $n_1\cdot n_2\cdot \dots \cdot n_\ell$
\[
\ord\left(\lambda(a, \sum_p b_p)\right)=\ord\left(\sum_p \lambda(a, b_p)\right)= \prod_p \ord~\lambda (a, b_p)=\ord(a).
\]    

\end{proof}

The proof of Theorem \ref{features} follows easily from the above lemmas:

\begin{proof}[Proof of Theorem \ref{features}]  Let $q:\Gamma\to G$ be a surjective homomorphism onto a finite group such that $\ker(q)\subseteq \Gamma^{(1)}$.  Note that  By Lemma \ref{3mfldversion}, $\tilde{q}^\ast: H^2(G)\to H^2(M)$ is an isomorphism.

We first show that the cup-product pairing $H^2(G)\otimes H^2(G)\to H^4(G)$ is non-degenerate.  Given a non-trivial element $\omega_1 \in H^2(G)$, $\tilde{q}^\ast(\omega_1)$ gives a non-trivial element of $H^2(M)_{\textrm{Tors}}$ since $\tilde{q}^\ast$ is injective.  By Lemma \ref{order}, there exists an element $\eta\in H^2(M)$ such that the order of $\lambda(\tilde{q}^\ast(\omega_1),\eta)$ is equal to the order of $\omega_1$, and since $\tilde{q}^\ast: H^2(G)\to H^2(M)$ is surjective, there exists an element $\omega_2\in H^2(G)$ such that $\tilde{q}^\ast(\omega_2)=\eta$.  Applying Lemma \ref{fundid} together with the fact that $\tilde{q}_\ast: H_0(M,\mathbb{Q}/\mathbb{Z})\to H_0(G,\mathbb{Q}/\mathbb{Z})$ is an isomorphism, we have
\[
0\ne \tilde{q}_\ast(\lambda(\tilde{q}^\ast(\omega_1),\eta))= \tilde{q}_\ast(\lambda(\tilde{q}^\ast(\omega_1),\tilde{q}^\ast(\omega_1)))= \tilde{q}_\ast([M])\frown \beta^{-1}(\omega_1\smile \omega_2).
\]
This shows that $\omega_1\smile \omega_2\ne 0$, and since $\omega_1$ was an arbitrary non-trivial element of $H^2(G)$ the cup product pairing $H^2(G) \otimes H^2(G)\to H^4(G)$ is non-degenerate.  

Note that the order of $\tilde{q}_\ast([M])\frown \beta^{-1}(\omega_1\smile \omega_2)$ divides the order of $\tilde{q}_\ast([M])$, so since 
\[
\ord(\omega_1)=\ord(\lambda(\tilde{q}^\ast(\omega_1),\eta))= \ord\left(\tilde{q}_\ast([M])\frown \beta^{-1}(\omega_1\smile \omega_2)\right),
\]
the order of $\omega_1$ divides $\ord(\tilde{q}_\ast([M]))$ for all $\omega_1\in H^2(G)$.  This shows that \linebreak $\ord(\tilde{q}_\ast([M]))\cdot H^2(G)=0$.  Since $H^2(G)\cong H^2(M)\cong H_1(M)$, $\ord(\tilde{q}_\ast([M]))$ annihilates $H_1(M)$ as well. 

\end{proof}

\section{Cup products in $H^\ast(G)$ for finite quotients $G\cong \pi_1(M)/N$ with $N<\pi_1(M)^{(2)}$}\label{cyclicity}

In this section we prove Lemma \ref{cupproducts} from the introduction.  Throughout this section, we will let $G$ be a finite group, and we will let $\rho: \pi_1(M)\cong \Gamma \to G$ a surjective homomorphism such that $\ker(\rho)\subseteq \Gamma^{(2)}$.  We will also let $Q= H_1(M)$, $N=[\Gamma,\Gamma]$, $K=[G,G]$, and we will refer to the maps labeled in the following commutative diagram of exact sequences:
 \[
\begin{array}[c]{ccccccccc}
1\to N \longrightarrow ~\Gamma \stackrel{q\circ \rho}{\longrightarrow} Q \to 1\\
~~~\downarrow \mbox{\scriptsize $r$}~~~~~~\downarrow  \mbox{\scriptsize $\rho$}~~~~\downarrow\mbox{\scriptsize $id$ } \\
1  \to  K \longrightarrow~ G \stackrel{q}{\longrightarrow} Q \to 1
\end{array}
\]
The above commutative diagram corresponds to a commutative diagram of continuous maps of the following form:
 \[
\begin{array}[c]{ccccccccc}
\widetilde{M}~ \longrightarrow M \stackrel{\tilde{q}\circ \tilde{\rho}}{\longrightarrow} BQ\\
~~~\downarrow \mbox{\scriptsize $\tilde{r}$}~~~~~~ \downarrow  \mbox{\scriptsize $\tilde{\rho}$}~~~~~\downarrow\mbox{\scriptsize $id$ }~~ \\
BK~ \rightarrow BG \stackrel{\tilde{q}}{\rightarrow} BQ,
\end{array}
\]
where $\widetilde{M}$ is the regular covering space of $M$ corresponding to $N<\pi_1(M)$.  The map $\tilde{\rho}^\ast$ induces a morphism between the Lyndon-Hochschild-Serre spectral sequence for the extension $1\to K\to G\to Q\to 1$ and the Cartan-Serre spectral sequence for the regular cover $\widetilde{M}\to M$.  The proof of Lemma 5 proceeds by analyzing this morphism.  We will require several preliminary lemmas.

\begin{lemma}\label{depth}  The cover $\widetilde{M}$ is a rational homology 3-sphere and $\tilde{r}^\ast: H^2(K)\to H^2(\widetilde{M})$ is an isomorphism.  
\end{lemma}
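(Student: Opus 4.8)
```latex
The plan is to establish Lemma \ref{depth} in two parts: first that $\widetilde{M}$ is a rational homology 3-sphere (equivalently, that $H_1(\widetilde{M})$ is finite), and second that $\tilde{r}^\ast: H^2(K)\to H^2(\widetilde{M})$ is an isomorphism. The second part should then follow from the first by invoking Lemma \ref{H2surj} in the same way Lemma \ref{3mfldversion} was proved, so the real content lies in the first part. The key observation is that $\widetilde{M}$ is the cover corresponding to $N=[\Gamma,\Gamma]=\Gamma^{(1)}$, i.e. $\widetilde{M}$ is the \emph{maximal abelian cover} of $M$, and its fundamental group is $\Gamma^{(1)}$ with first homology $\Gamma^{(1)}/\Gamma^{(2)}$.

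First I would show $b_1(\widetilde{M})=0$. Since $\ker(\rho)\subseteq \Gamma^{(2)}$ and $\rho$ is surjective onto the finite group $G$, the quotient $\Gamma/\Gamma^{(2)}$ surjects onto $G$ with kernel contained in $\Gamma^{(2)}/\Gamma^{(2)}$, and more to the point $\Gamma/\ker(\rho)\cong G$ is finite while $\ker(\rho)\subseteq \Gamma^{(2)}$. The decisive consequence is that $\Gamma^{(1)}/\Gamma^{(2)}$ is finite: indeed $\Gamma^{(1)}/\ker(\rho)\cong K$ is finite (being a subgroup of the finite group $G$), and since $\ker(\rho)\subseteq \Gamma^{(2)}\subseteq \Gamma^{(1)}$, the group $\Gamma^{(1)}/\Gamma^{(2)}$ is a quotient of $\Gamma^{(1)}/\ker(\rho)$ and hence finite. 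Now $H_1(\widetilde{M})\cong \pi_1(\widetilde{M})^{\mathrm{ab}}\cong \Gamma^{(1)}/\Gamma^{(2)}$ is finite, so $b_1(\widetilde{M})=0$. Together with the fact that $\widetilde{M}$ is a closed orientable 3-manifold (a finite-sheeted cover is automatic once $K$ is finite, but even for the abelian cover one argues directly that finiteness of $H_1$ forces $H_2$ finite via Poincar\'e duality), this yields that $\widetilde{M}$ is a rational homology 3-sphere. As in Lemma \ref{3mfldversion}, $H_2(\widetilde{M})\cong H^1(\widetilde{M})\cong \Hom(H_1(\widetilde{M}),\mathbb{Z})=0$, confirming finiteness of $H_2(\widetilde{M})$.

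With $\widetilde{M}$ established as a rational homology 3-sphere, the isomorphism $\tilde{r}^\ast: H^2(K)\to H^2(\widetilde{M})$ follows by applying Lemma \ref{3mfldversion} to the covering space $\widetilde{M}$ in place of $M$, with the homomorphism $r:\pi_1(\widetilde{M})\cong N \to K$ playing the role of $q$. I must check the hypothesis $\ker(r)\subseteq \pi_1(\widetilde{M})^{(1)}$: since $r$ is (the restriction of $\rho$) surjective onto $K$ with kernel $\ker(\rho)\cap N=\ker(\rho)$, and $\ker(\rho)\subseteq \Gamma^{(2)}=N^{(1)}=\pi_1(\widetilde{M})^{(1)}$, this inclusion holds precisely because $N=\Gamma^{(1)}$ so that $N^{(1)}=\Gamma^{(2)}$. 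Thus Lemma \ref{3mfldversion} applies verbatim and gives that $\tilde{r}^\ast: H^2(K)\to H^2(\widetilde{M})$ is an isomorphism.

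The main obstacle, and the step deserving the most care, is confirming that $N=\Gamma^{(1)}=[\Gamma,\Gamma]$ so that the cover $\widetilde{M}$ corresponding to $N$ is genuinely the maximal abelian cover with $\pi_1(\widetilde{M})\cong\Gamma^{(1)}$ and $\pi_1(\widetilde{M})^{(1)}=\Gamma^{(2)}$; this identification is what converts the global hypothesis $\ker(\rho)\subseteq \Gamma^{(2)}$ into the local hypothesis $\ker(r)\subseteq \pi_1(\widetilde{M})^{(1)}$ needed to invoke Lemma \ref{3mfldversion}. One subtlety to handle cleanly is that $\widetilde{M}$ is in general an \emph{infinite} cover (since $\Gamma^{(1)}$ need not have finite index, as $Q=H_1(M)$ may be infinite), so finiteness of $H_1(\widetilde{M})$ cannot be deduced from finiteness of the covering degree and must instead come from the group-theoretic finiteness of $\Gamma^{(1)}/\Gamma^{(2)}$ established above.
```
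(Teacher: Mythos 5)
Your proof is correct and follows essentially the same route as the paper's: identify $N^{(1)}=\Gamma^{(2)}$ so that $\ker(r)\subseteq N^{(1)}$, deduce that $H_1(\widetilde{M})\cong N/N^{(1)}\cong\Gamma^{(1)}/\Gamma^{(2)}$ is finite because it is a quotient of the finite group $K$ (the paper phrases this as $\tilde{r}_\ast$ being an isomorphism onto $H_1(K)$), and then invoke Lemma \ref{3mfldversion}. One small correction: your worry that $Q=H_1(M)$ might be infinite is unfounded, since $Q\cong G/K$ is a quotient of the finite group $G$; this is worth noting because $\widetilde{M}$ must be a closed, finite-sheeted cover for it to be a rational homology 3-sphere at all and for Lemma \ref{3mfldversion}, whose proof uses Poincar\'e duality, to apply.
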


\begin{proof}  Since $\Gamma$ surjects onto $G$, $N\cong \pi_1(\widetilde{M})$ surjects onto $K$ and $H_1(\widetilde{M})$ surjects onto $H_1(K)$.  By assumption $\ker(\rho)\subseteq \Gamma^{(2)}\cong [N,N]= N^{(1)}$ and $r=\rho|_N$, so $\ker(r)\subseteq N^{(1)}$.  The abelianization map $N\to N/N^{(1)}\cong H_1(\widetilde{M})$ therefore factors through $r$, and so $\tilde{r}_\ast: H_1(\widetilde{M})\to H_1(K)$ is an isomorphism.  Since $K$ is a finite group, its abelianization $H_1(K)$ is also finite, so $H_1(M,\mathbb{Q})\cong H_1(K)\otimes \mathbb{Q}$ is trivial and therefore $\widetilde{M}$ is a rational homology 3-sphere.  The result then follows by Lemma \ref{3mfldversion} in section \ref{lpandcups}.
\end{proof}

To set some notation for the next lemm, let $(E_k)^\alpha$, $(d_k)^\alpha$ and $(E_k)^\beta$, $(d_k)^\beta$ denote the pages and differentials in the cohomological spectral sequences $H^r(Q,H^s(K))\implies H^{r+s}(G)$ and $H^r(Q,H^s(\widetilde{M}))\implies H^{r+s}(M)$ respectively.

\begin{lemma}\label{diag}  There exists a commutative diagram with rows of the following form:
\begin{equation}\label{CD0}
\begin{array}[c]{ccccccccc}
0\to (E_4^{1,2})^\alpha \longrightarrow ( E_3^{1,2})^\alpha \stackrel{(d_3^{1,2})^\alpha}{\longrightarrow} (E_3^{4,0})^\alpha \to (E_4^{4,0})^\alpha \to 0\\
~~~~~~~~~~~~~~~~\downarrow\mbox{\scriptsize $\rho_4^\ast$ }~~~~~~~~~~\downarrow  \mbox{\scriptsize $\rho_3^\ast$}~~~~~~~~~~~~~\downarrow\mbox{\scriptsize $\rho_3^\ast$}~~~~~~~~~\downarrow\mbox{\scriptsize $\rho_4^\ast$}~~~~~~~~~~~~~~ \\
0\to (E_4^{1,2})^\beta \longrightarrow ( E_3^{1,2})^\beta \stackrel{(d_3^{1,2})^\beta}{\longrightarrow} (E_3^{4,0})^\beta \to (E_4^{4,0})^\beta \to 0\\
\end{array}
\end{equation}
where the middle two homomorphisms labeled $\rho_3^\ast$ are isomorphisms.
\end{lemma}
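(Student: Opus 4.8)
The plan is to extract the entire statement from the morphism of spectral sequences $\{\rho_k^\ast\}$ induced by $\tilde\rho$ (which is already available from the setup preceding the lemma), reducing everything to the behaviour of the fiber map in degrees $0$, $1$, and $2$. First I would record that behaviour. Both fibers are connected, so $H^0(K)\cong\mathbb{Z}\cong H^0(\widetilde{M})$ and $\tilde r^\ast$ is the identity in degree $0$. By Lemma \ref{depth}, $\widetilde{M}$ is a rational homology 3-sphere and $K$ is finite, so $H^1(K)=\Hom(H_1(K),\mathbb{Z})=0$ and $H^1(\widetilde{M})=\Hom(H_1(\widetilde{M}),\mathbb{Z})=0$; thus the fiber map vanishes trivially in degree $1$. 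In degree $2$, Lemma \ref{depth} gives that $\tilde r^\ast\colon H^2(K)\to H^2(\widetilde{M})$ is an isomorphism of $Q$-modules, so the induced map $H^1(Q,H^2(K))\to H^1(Q,H^2(\widetilde{M}))$ is an isomorphism as well.

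Next I would identify the two middle $E_3$ entries. Since $H^1$ of each fiber vanishes, the entries $(E_2^{3,1})$, $(E_2^{2,1})$, and $(E_2^{-1,3})$ are zero in both spectral sequences. The differential $d_2$ has bidegree $(2,-1)$, so these vanishings force every $d_2$ entering or leaving the positions $(1,2)$ and $(4,0)$ to be zero. Hence $(E_3^{1,2})\cong(E_2^{1,2})$ and $(E_3^{4,0})\cong(E_2^{4,0})=H^4(Q,\mathbb{Z})$ in each sequence, and these identifications are compatible with $\{\rho_k^\ast\}$. The map $\rho_3^\ast$ on $(E_3^{1,2})$ is then the map $H^1(Q,H^2(K))\to H^1(Q,H^2(\widetilde{M}))$ induced by $\tilde r^\ast$, hence an isomorphism; and the map $\rho_3^\ast$ on $(E_3^{4,0})$ is the identity of $H^4(Q,\mathbb{Z})$, hence an isomorphism. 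This is the heart of the lemma.

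Finally I would produce the exact rows and check commutativity. The differential $d_3$ has bidegree $(3,-2)$, so the incoming $d_3$ at $(1,2)$ originates at $(-2,4)=0$ and the outgoing $d_3$ at $(4,0)$ lands in $(7,-2)=0$. Consequently $E_4^{1,2}=\ker(d_3^{1,2})$ and $E_4^{4,0}=\coker(d_3^{1,2})$, which is exactly the assertion that $0\to E_4^{1,2}\to E_3^{1,2}\to E_3^{4,0}\to E_4^{4,0}\to 0$ is exact, in each of the two spectral sequences. The vertical maps are the components of $\{\rho_k^\ast\}$: the middle square commutes because a morphism of spectral sequences commutes with $d_3$, and the outer squares commute because the two maps labeled $\rho_4^\ast$ are, by definition, the maps induced by $\rho_3^\ast$ on the homology of $(E_3,d_3)$ at the positions $(1,2)$ and $(4,0)$.

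The comparison is essentially formal once Lemma \ref{depth} is in hand; the only genuinely non-formal input, and the step I would treat most carefully, is the degree-$1$ vanishing $H^1(K)=H^1(\widetilde{M})=0$. It is precisely this vanishing that collapses $E_2$ to $E_3$ at the two positions $(1,2)$ and $(4,0)$ and thereby makes the four-term sequence displayed in (\ref{CD0}), rather than some longer complex, the correct object to compare across the two spectral sequences.
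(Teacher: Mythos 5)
Your proposal is correct and follows essentially the same route as the paper: use Lemma \ref{depth} to see that the fiber map is an isomorphism on $H^s$ for $s\le 2$, observe that the vanishing of $H^1$ of both fibers kills the relevant $d_2$ differentials so that the $(1,2)$ and $(4,0)$ entries survive unchanged to the $E_3$-page, and then read off the four-term exact sequences and their comparison from the formal properties of a morphism of spectral sequences. (The only nitpick: the vanishing of $(E_2^{-1,3})$ is due to the negative column index, not to $H^1$ of the fiber, but this does not affect the argument.)
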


\begin{proof}   Note that since the first cohomology group with integral coefficients is trivial for any finite group, $(E_2)^\alpha$ has the following form:

\begin{sseq}[grid=none, labels=none, %xlabelstep=1, ylabelstep=1,ylabels={0;1;2;}, xlabels={;;0;;;1;;;2;;;3;;;4;;;5}, 
entrysize=.79 cm ]{0...19}{0...3}
\ssmoveto 0 0
\ssdrop{\mbox{\small{{$\mathbb{Z}$}}}}
\ssmove 4 0
\ssdrop{\mbox{\small{0}}}
\ssmove 4 0
\ssdrop{\mbox{\small{$H^2(Q)$}}}
\ssmove 4 0
\ssdrop{\mbox{\small{$H^3(Q)$}}}
\ssmove 4 0
\ssdrop{\mbox{\small{$H^4(Q)$}}.}
\ssmoveto 0 1
\ssdrop{\mbox{\small{0}}}
\ssmove 4 0
\ssdrop{\mbox{\small{0}}}
\ssmove 4 0
\ssdrop{\mbox{\small{0}}}
\ssmove 4 0
\ssdrop{\mbox{\small{0}}}
\ssmove 4 0
\ssdrop{\mbox{\small{0}}}
\ssmoveto 0 2
\ssdrop{\mbox{\small{$H^0(Q,H^2(K))$}}}
\ssmove 4 0
\ssdrop{\mbox{\small{$H^1(Q,H^2(K))$}}}
\ssmove 4 0
\ssdrop{\mbox{\small{$H^2(Q,H^2(K))$}}}
\ssmove 4 0
\ssdrop{\mbox{\small{$H^3(Q,H^2(K))$}}}
\ssmove 4 0
\ssdrop{\mbox{\small{$H^4(Q,H^2(K))$}}}
\ssmoveto 0 3
\ssdrop{\mbox{\small{$H^0(Q,H^3(K))$}}}
\ssmove 4 0
\ssdrop{\mbox{\small{$H^1(Q,H^3(K))$}}}
\ssmove 4 0
\ssdrop{\mbox{\small{$H^2(Q,H^3(K))$}}}
\ssmove 4 0
\ssdrop{\mbox{\small{$H^3(Q,H^3(K))$}}}
\ssmove 4 0
\ssdrop{\mbox{\small{$H^4(Q,H^3(K))$}}}
\end{sseq}
By assumption the manifold $M$ is orientable, so $Q$ acts on $\widetilde{M}$ by orientation preserving homeomorphims.  It follows that $Q$ acts on $H^3(\widetilde{M},\mathbb{Z})\cong \mathbb{Z}$ trivially, and hence \linebreak $H^i(Q,H^3(\widetilde{M},\mathbb{Z}))\cong H^i(Q)$ for all $i$.  Furthermore, since $\widetilde{M}$ is a rational homology 3-sphere and $H^1(\widetilde{M})\cong \Hom(H_1(\widetilde{M}), \mathbb{Z})=0$, $(E_2)^\beta$ has the following form: 

\begin{sseq}[grid=none, labels=none, %xlabelstep=1, ylabelstep=1,ylabels={0;1;2;}, xlabels={;;0;;;1;;;2;;;3;;;4;;;5}, 
entrysize=.79 cm ]{0...19}{0...3}
\ssmoveto 0 0
\ssdrop{\mbox{\small{{$\mathbb{Z}$}}}}
\ssmove 4 0
\ssdrop{\mbox{\small{0}}}
\ssmove 4 0
\ssdrop{\mbox{\small{$H^2(Q)$}}}
\ssmove 4 0
\ssdrop{\mbox{\small{$H^3(Q)$}}}
\ssmove 4 0
\ssdrop{\mbox{\small{$H^4(Q)$}},}
\ssmoveto 0 1
\ssdrop{\mbox{\small{0}}}
\ssmove 4 0
\ssdrop{\mbox{\small{0}}}
\ssmove 4 0
\ssdrop{\mbox{\small{0}}}
\ssmove 4 0
\ssdrop{\mbox{\small{0}}}
\ssmove 4 0
\ssdrop{\mbox{\small{0}}}
\ssmoveto 0 2
\ssdrop{\mbox{\small{$H^0(Q,H^2(\widetilde{M}))$}}}
\ssmove 4 0
\ssdrop{\mbox{\small{$H^1(Q,H^2(\widetilde{M}))$}}}
\ssmove 4 0
\ssdrop{\mbox{\small{$H^2(Q,H^2(\widetilde{M}))$}}}
\ssmove 4 0
\ssdrop{\mbox{\small{$H^3(Q,H^2(\widetilde{M}))$}}}
\ssmove 4 0
\ssdrop{\mbox{\small{$H^4(Q,H^2(\widetilde{M}))$}}}
\ssmoveto 0 3
\ssdrop{\mbox{\small{{$\mathbb{Z}$}}}}
\ssmove 4 0
\ssdrop{\mbox{\small{$0$}}}
\ssmove 4 0
\ssdrop{\mbox{\small{$H^2(Q)$}}}
\ssmove 4 0
\ssdrop{\mbox{\small{$H^3(Q)$}}}
\ssmove 4 0
\ssdrop{\mbox{\small{$H^4(Q)$}}}
\end{sseq}
The map $\tilde{\rho}$ induces a morphism between these two spectral sequences, i.e. a sequence of homomorphisms $\rho^\ast_k: (E_k)^\alpha \to (E_k)^\beta$ such that $ \rho^\ast_k\circ(d_k)^\alpha =(d_k)^\beta\circ \rho^\ast_k $, $\rho_{k+1}^\ast$ is the map induced on homology by $\rho_{k}^\ast$, and the map $\rho^\ast_2: H^r(Q,H^s(K))\to H^r(Q,H^s(\widetilde{M}))$ is induced by the $Q$-module homomorphism $q^\ast:H^s(K)\to H^s(\widetilde{M})$.  Since $q^\ast: H^2(K)\to H^2(\widetilde{M})$ is an isomorphism by Lemma \ref{depth} and $q^\ast:  H^s(K)\to H^s(\widetilde{M})$ is trivially an isomorphism for $s\in \{0,1\}$, the maps $\rho^\ast_2:(E_2^{r,s})^\alpha\to (E_2^{r,s})^\beta$ are isomorphisms for all pairs $(r,s)$ such that $s\le 2$.

Since the first row of each spectral sequence vanishes, the differentials $(d_2^{i,2})^\alpha$, $(d_2^{i,2})^\beta$ and $(d_2^{i,1})^\alpha$, $(d_2^{i,1})^\beta$, whose domain or range lie in the first row, are trivial for all $i$.  It follows that $0$-th row of the $E_3$-page of each spectral sequences is identical to the $0$-th row of the $E_2$-page, i.e. $(E_3^{i,0})^\alpha\cong (E_2^{i,0})^\alpha$ and $(E_3^{i,0})^\beta \cong (E_2^{i,0})^\beta$ for all $i$.   Since none of the $d_2$-differentials have image lying in the $0$-th or first columns, it also follows that for $j\in \{0,1\}$, $(E_3^{j,2})^\alpha\cong (E_2^{j,2})^\alpha$ and $(E_3^{j,2})^\beta\cong (E_2^{j,2})^\beta$.  

Note that the $E_3^{1,2}$ terms of both spectral sequences are outside the range of the $d_3$ differential, and that the $d_3$ differential vanishes on the $E_3^{4,0}$ terms.  This implies that the rows in the following commutative diagram are exact:
\[
\begin{array}[c]{ccccccccc}
0\to (E_4^{1,2})^\alpha \longrightarrow ( E_3^{1,2})^\alpha \stackrel{(d_3^{1,2})^\alpha}{\longrightarrow} (E_3^{4,0})^\alpha \to (E_4^{4,0})^\alpha \to 0\\
~~~~~~~~~~~~~~~~\downarrow\mbox{\scriptsize $\rho_4^\ast$ }~~~~~~~~~~\downarrow  \mbox{\scriptsize $\rho_3^\ast$}~~~~~~~~~~~~~\downarrow\mbox{\scriptsize $\rho_3^\ast$}~~~~~~~~~\downarrow\mbox{\scriptsize $\rho_4^\ast$}~~~~~~~~~~~~~~ \\
0\to (E_4^{1,2})^\beta \longrightarrow ( E_3^{1,2})^\beta \stackrel{(d_3^{1,2})^\beta}{\longrightarrow} (E_3^{4,0})^\beta \to (E_4^{4,0})^\beta \to 0\\
\end{array}
\]
The fact that $\rho^\ast_2$ induces isomorphisms on $E_2^{r,s}$ for $s\le2$ implies that $\rho^\ast_3$ induces isomorphisms $(E_3^{1,2})^\alpha \to (E_3^{1,2})^\beta$ and $(E_3^{4,0})^\alpha \to (E_3^{4,0})^\beta$, since $\rho^\ast_3$ is induced by $\rho^\ast_2$ and each of these groups are isomorphic to the corresponding entries on the $E_2$-page of the spectral sequence. This shows that the middle two homomorphisms in the above commutative diagram are isomorphisms.

\end{proof}

\begin{lemma}\label{BL}  The term $(E_4^{1,2})^\beta$ is trivial.
\end{lemma}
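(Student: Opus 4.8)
The plan is to read off $(E_4^{1,2})^\beta$ from the convergence of the spectral sequence $(E_*)^\beta$ to $H^\ast(M)$, using the fact that $M$ is a closed orientable $3$-manifold so that $H^3(M)\cong\mathbb{Z}$. The point is that the slot $(1,2)$ survives unchanged from $E_4$ to $E_\infty$, where it becomes a graded piece of $H^3(M)$ in a filtration level that can only contain torsion.

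First I would check that $(E_4^{1,2})^\beta=(E_\infty^{1,2})^\beta$. On every page $(E_k)^\beta$ with $k\ge 4$ the differential leaving $(1,2)$ has target in row $2-k+1\le -1$, which is empty, and the differential entering $(1,2)$ has source in column $1-k\le -3<0$, which is also empty. Hence no differential touches position $(1,2)$ from the fourth page onward, and combined with the already-established identification $(E_4^{1,2})^\beta=\ker (d_3^{1,2})^\beta$ this shows $(E_4^{1,2})^\beta=(E_\infty^{1,2})^\beta$.

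Next I would record the finiteness of the relevant graded pieces. Here $Q=H_1(M)\cong H_1(G)$ is finite because $G$ is finite, and $H^2(\widetilde{M})\cong H_1(\widetilde{M})$ is finite by Lemma \ref{depth} (as $\widetilde{M}$ is a rational homology $3$-sphere). Therefore $(E_2^{1,2})^\beta=H^1(Q,H^2(\widetilde{M}))$ is finite, and so is its subquotient $(E_\infty^{1,2})^\beta$. Similarly $(E_\infty^{3,0})^\beta$ is a subquotient of $H^3(Q)$, which is finite, and $(E_\infty^{2,1})^\beta=0$ since the $s=1$ row of $(E_2)^\beta$ vanishes. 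Thus, of the four graded pieces $(E_\infty^{p,3-p})^\beta$ of $H^3(M)$, all but $(E_\infty^{0,3})^\beta$ are finite. Finally I would compare ranks: the spectral sequence converges to $H^3(M)\cong\mathbb{Z}$, which has rank one, and rank is additive along the filtration. Writing $F^1H^3(M)\supseteq F^2\supseteq F^3\supseteq 0$ for the deepest filtration steps, the successive quotients of $F^1H^3(M)$ are exactly $(E_\infty^{1,2})^\beta,(E_\infty^{2,1})^\beta,(E_\infty^{3,0})^\beta$, all of rank zero, so $F^1H^3(M)$ has rank zero. Being a subgroup of $\mathbb{Z}$, it must then be trivial, whence $(E_\infty^{1,2})^\beta=F^1/F^2=0$ and therefore $(E_4^{1,2})^\beta=0$.

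The one step that needs care is ensuring that the free part of $H^3(M)\cong\mathbb{Z}$ is pinned to the corner $(0,3)$ rather than leaking into $(1,2)$; this is precisely what the finiteness of $H^1(Q,H^2(\widetilde{M}))$ and of $H^3(Q)$ delivers through the rank count. An alternative, more computational route reaches the same conclusion by identifying the edge homomorphism $H^3(M)\to (E_2^{0,3})^\beta=H^3(\widetilde{M})$ with the restriction map along the covering $\widetilde{M}\to M$, which is multiplication by $|Q|$ and hence injective, forcing $(E_\infty^{0,3})^\beta\cong\mathbb{Z}$ and collapsing the lower filtration; I would prefer the rank argument since it avoids this degree computation.
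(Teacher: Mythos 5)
Your proof is correct and takes essentially the same route as the paper: identify $(E_4^{1,2})^\beta$ with $(E_\infty^{1,2})^\beta$, observe that the graded pieces $(E_\infty^{p,3-p})^\beta$ for $p\ge 1$ of the filtration of $H^3(M)\cong\mathbb{Z}$ are torsion (you say finite/rank zero, the paper says annihilated by $|Q|$), and conclude that the corresponding filtration steps, being torsion subgroups of $\mathbb{Z}$, vanish. The only cosmetic difference is that you collapse the three deepest filtration steps in one rank count while the paper peels them off one at a time.
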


\begin{proof}
The term $(E_4^{1,2})^\beta$ is isomorphic to  $(E_\infty^{1,2})^\beta$, so it suffices to show that $(E_\infty^{1,2})^\beta$ is trivial.  The term $(E_\infty^{1,2})^\beta$ lies on the third diagonal of the $E_\infty$-page for the spectral sequence $H^{r}(Q,H^s(\widetilde{M}))\implies H^{r+s}(M)$.  Since $H^{r}(Q,H^s(\widetilde{M}))$ is annihilated by $|Q|$ for any $r\ge 1$, the group $E_k^{r,s}$ is torsion for all $r\ge1$.  The groups $(E_\infty^{i,3-i})^\beta$ for give successive quotients in the filtration 
\[
(E_\infty^{3,0})^\beta\cong F^3_3\subseteq F^3_2\subseteq F^3_1\subseteq F^3_0= H^3(M)\cong \mathbb{Z}.
\] 
Since $\mathbb{Z}$ is torsion-free and $(E_\infty^{3,0})^\beta$ is torsion, $(E_\infty^{3,0})^\beta\cong 0$.  This implies that $(E_\infty^{2,1})^\beta\cong F^3_2/F^3_3\cong F^3_2/(E_\infty^{3,0})\cong F^3_2$, so $F^3_2\subseteq \mathbb{Z}$ is torsion and hence trivial as well.  Applying this argument once more, we find that $(E_\infty^{1,2})^\beta\cong F^3_1/F^3_2$ must vanish as well.  
\end{proof}

\begin{lemma}\label{E4isom}  The term $(E_4^{4,0})^\alpha$ is isomorphic to $(E_4^{4,0})^\beta$.
\end{lemma}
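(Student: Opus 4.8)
The plan is to read the isomorphism directly off the commutative diagram with exact rows supplied by Lemma \ref{diag}, using only that its two inner vertical maps are isomorphisms. First I would identify each of $(E_4^{4,0})^\alpha$ and $(E_4^{4,0})^\beta$ as a cokernel. No $d_2$ or $d_3$ differential emanates from position $(4,0)$, since the targets $(6,-1)$ and $(7,-2)$ lie in rows that vanish, and the only nonzero differential landing in $(4,0)$ on the $E_3$-page is $(d_3^{1,2})$ (there is no incoming $d_2$ because the row $s=1$ is zero, so $E_3^{4,0}\cong E_2^{4,0}$). Hence in each spectral sequence $(E_4^{4,0})=\coker\big((d_3^{1,2})\big)$; this is exactly what the right-hand end $(E_3^{4,0})\to (E_4^{4,0})\to 0$ of the exact rows of diagram (\ref{CD0}) records.

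The essential step is then the following elementary observation applied to the rightmost square of (\ref{CD0}): given a commutative square
\[
\begin{array}[c]{ccc}
(E_3^{1,2})^\alpha & \stackrel{(d_3^{1,2})^\alpha}{\longrightarrow} & (E_3^{4,0})^\alpha \\
\downarrow\mbox{\scriptsize $\rho_3^\ast$} & & \downarrow\mbox{\scriptsize $\rho_3^\ast$} \\
(E_3^{1,2})^\beta & \stackrel{(d_3^{1,2})^\beta}{\longrightarrow} & (E_3^{4,0})^\beta
\end{array}
\]
in which both vertical maps are isomorphisms, the right-hand vertical map carries the image of the top horizontal map onto the image of the bottom one. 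Indeed, since $\rho_3^\ast$ on $(E_3^{1,2})$ is surjective, commutativity gives $\rho_3^\ast\big(\mathrm{im}\,(d_3^{1,2})^\alpha\big)=\mathrm{im}\,\big(\rho_3^\ast\circ (d_3^{1,2})^\alpha\big)=\mathrm{im}\,\big((d_3^{1,2})^\beta\circ\rho_3^\ast\big)=\mathrm{im}\,(d_3^{1,2})^\beta$. Because $\rho_3^\ast$ on $(E_3^{4,0})$ is an isomorphism sending $\mathrm{im}\,(d_3^{1,2})^\alpha$ onto $\mathrm{im}\,(d_3^{1,2})^\beta$, it descends to an isomorphism of quotients $(E_4^{4,0})^\alpha=\coker\big((d_3^{1,2})^\alpha\big)\stackrel{\cong}{\longrightarrow}\coker\big((d_3^{1,2})^\beta\big)=(E_4^{4,0})^\beta$, which is precisely the induced map $\rho_4^\ast$.

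There is essentially no obstacle: all the real work has already gone into establishing diagram (\ref{CD0}) and the fact that its inner verticals are isomorphisms, and what remains is a one-line diagram chase on cokernels. I note in passing an alternative route: by Lemma \ref{BL} we have $(E_4^{1,2})^\beta=0$, so the bottom row of (\ref{CD0}) collapses to a short exact sequence exhibiting $(d_3^{1,2})^\beta$ as injective, after which a chase on the exact rows again forces the induced map on $(E_4^{4,0})$ to be an isomorphism. The cokernel argument above is cleaner, however, as it does not even require knowing that $(E_4^{1,2})^\beta$ vanishes.
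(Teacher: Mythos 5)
Your proof is correct, and at its core it is the same argument as the paper's: both read the isomorphism off the commutative diagram (\ref{CD0}) of Lemma \ref{diag} using the fact that the two middle vertical maps are isomorphisms. The one genuine difference is in what you use as input. The paper's proof first invokes Lemma \ref{BL} to get $(E_4^{1,2})^\beta=0$, deduces from commutativity that $(E_4^{1,2})^\alpha=0$ as well, and then runs a four-term diagram chase with the first three vertical maps being isomorphisms. Your cokernel formulation --- identifying $(E_4^{4,0})=\coker\bigl((d_3^{1,2})\bigr)$ and observing that a commutative square with vertical isomorphisms induces an isomorphism on cokernels --- correctly bypasses the left half of the diagram entirely, so Lemma \ref{BL} is never needed for this step (and, since $(E_4^{1,2})^\alpha=0$ is not used elsewhere in the paper, your route would let one drop Lemma \ref{BL} altogether). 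What the paper's version buys in exchange is the extra conclusion that $(E_4^{1,2})^\alpha$ vanishes, which is a mildly interesting fact about the group extension but plays no role in the proof of Lemma \ref{cyclicimage}. Your verification that $(E_4^{4,0})$ really is the cokernel of $(d_3^{1,2})$ --- no outgoing $d_2$ or $d_3$ from $(4,0)$, and the only incoming $E_3$-differential being $(d_3^{1,2})$ --- is accurate and matches the exactness of the rows asserted in Lemma \ref{diag}.
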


\begin{proof}

By Lemma \ref{BL}, the term in bottom left of the commutative diagram (\ref{CD0}) from Lemma \ref{diag} is trivial.  The commutativity of the diagram together with the fact that the second vertical map is an isomorphism shows that $(E_4^{1,2})^\alpha$ is also trivial. Since the first $3$ maps in this commutative diagram of exact sequences are isomorphisms, a straightforward diagram chasing argument shows that the last map $\rho_4^\ast:(E_4^{4,0})^\alpha \to (E_4^{4,0})^\beta$ is an isomorphism as well.

\end{proof} 

\begin{lemma}\label{E4cyclicity} The term $(E_4^{4,0})^\beta$ is cyclic.
\end{lemma}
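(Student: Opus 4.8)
The plan is to read off the answer from convergence of the spectral sequence $(E_k)^\beta$ to $H^*(M)$, using that $M$ is a closed $3$-manifold. The key structural observation is that the corner $(4,0)$ emits no nonzero differentials: for $r\ge 2$ the target of $d_r$ on $(E_r^{4,0})^\beta$ lies at $(4+r,1-r)$, which is below the row $s=0$. Thus the only differentials affecting $(4,0)$ are incoming. The potential $d_2$ into $(4,0)$ has source $(E_2^{2,1})^\beta = H^2(Q,H^1(\widetilde M)) = 0$, so it vanishes and $(E_3^{4,0})^\beta \cong (E_2^{4,0})^\beta = H^4(Q)$. The remaining incoming differentials are $(d_3^{1,2})^\beta : (E_3^{1,2})^\beta \to (E_3^{4,0})^\beta$ and $(d_4^{0,3})^\beta : (E_4^{0,3})^\beta \to (E_4^{4,0})^\beta$, and no differential reaches $(4,0)$ on later pages (the source $(-1,4)$ is zero). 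Hence $(E_\infty^{4,0})^\beta \cong (E_5^{4,0})^\beta \cong (E_4^{4,0})^\beta / \mathrm{im}\,(d_4^{0,3})^\beta$.

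Next I would invoke convergence. The terms $(E_\infty^{r,s})^\beta$ with $r+s=4$ are the successive quotients of a filtration of $H^4(M)$, and $H^4(M)=0$ because $M$ is a closed $3$-manifold. In particular $(E_\infty^{4,0})^\beta = 0$, so by the previous paragraph the differential $(d_4^{0,3})^\beta$ is surjective onto $(E_4^{4,0})^\beta$.

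Finally I would identify the source of this differential. Since $Q$ acts trivially on $H^3(\widetilde M)\cong \mathbb{Z}$, we have $(E_2^{0,3})^\beta = H^0(Q,H^3(\widetilde M)) \cong \mathbb{Z}$. Because the row $s=1$ of $(E_2)^\beta$ vanishes, the differential $d_3^{0,3}$ (whose target is $(E_3^{3,1})^\beta$, a subquotient of $H^3(Q,H^1(\widetilde M))=0$) is zero, and there are no incoming differentials at $(0,3)$ before $d_4$; therefore $(E_4^{0,3})^\beta$ is isomorphic to the subgroup $\ker (d_2^{0,3})^\beta$ of $\mathbb{Z}$ and is in particular cyclic. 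As $(E_4^{4,0})^\beta$ is a homomorphic image of this cyclic group under $(d_4^{0,3})^\beta$, it is cyclic.

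The argument is essentially bookkeeping, and the one point that genuinely needs care — the step I would expect a careful reader to scrutinize — is the verification that $(4,0)$ receives differentials from exactly $(1,2)$ and $(0,3)$ and emits none, so that $(E_\infty^{4,0})^\beta$ really collapses to the single quotient $(E_4^{4,0})^\beta/\mathrm{im}\,(d_4^{0,3})^\beta$; once this is confirmed, cyclicity follows immediately from the source of the fourth differential being a subgroup of $\mathbb{Z}$.
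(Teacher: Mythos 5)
Your proposal is correct and follows essentially the same route as the paper: both arguments use that $(E_\infty^{4,0})^\beta$ vanishes because $H^4(M)=0$ to conclude that $(d_4^{0,3})^\beta$ surjects onto $(E_4^{4,0})^\beta$, and then observe that its source $(E_4^{0,3})^\beta$ is a subgroup of $(E_2^{0,3})^\beta\cong H^0(Q,H^3(\widetilde{M}))\cong\mathbb{Z}$, so the quotient is cyclic. Your write-up is simply more explicit about the bookkeeping of which differentials touch the $(4,0)$ and $(0,3)$ positions, which the paper leaves implicit.
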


\begin{proof}

Note that there is an exact sequence
\[
(E_4^{0,3})^\beta \stackrel{(d_4^{4,0})^\beta}{\longrightarrow} (E_4^{4,0})^\beta \to (E_5^{4,0})^\beta \to 0.
\]

Since $E_5^{4,0}\cong E_\infty^{4,0}$ and $H^4(\Gamma)=0$, it follows that $(d_4^{4,0})^\beta$ is surjective.  The group $(E_4^{0,3})^\beta$ is isomorphic to a subgroup of $(E_2^{0,3})^\beta\cong H^0(Q,H^3(\widetilde{M}))\cong \mathbb{Z}$, so since $(E_4^{4,0})^\beta$ is isomorphic to a quotient of  $(E_4^{0,3})^\beta$,  $(E_4^{4,0})^\beta$ is cyclic. 
\end{proof}

\begin{lemma}\label{cyclicimage}  The map $q^\ast:H^4(Q)\to H^4(G)$ has cyclic image.
\end{lemma}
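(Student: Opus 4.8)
The plan is to identify the image of $q^\ast: H^4(Q)\to H^4(G)$ with the term $(E_\infty^{4,0})^\alpha$ of the Lyndon--Hochschild--Serre spectral sequence $H^r(Q,H^s(K))\implies H^{r+s}(G)$, and then to observe that this term is a quotient of the cyclic group $(E_4^{4,0})^\alpha$. First I would recall the standard fact that the homomorphism $q^\ast$ is the inflation map, and that inflation coincides with the edge homomorphism along the bottom row of $(E_k)^\alpha$. Concretely, in degree $4$ this edge homomorphism factors as the surjection $(E_2^{4,0})^\alpha\twoheadrightarrow (E_\infty^{4,0})^\alpha$ onto the bottom successive quotient, followed by the inclusion $(E_\infty^{4,0})^\alpha\into H^4(G)$ of the lowest nonzero piece of the filtration on $H^4(G)$. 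Consequently the image of $q^\ast$ is exactly $(E_\infty^{4,0})^\alpha$, so it suffices to show that this group is cyclic.

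Next I would track how the position $(4,0)$ behaves under the differentials. For $k\ge 2$ the outgoing differential $d_k:(E_k^{4,0})^\alpha\to (E_k^{4+k,1-k})^\alpha$ lands in a row of negative index and hence vanishes; therefore each $(E_{k+1}^{4,0})^\alpha$ is a quotient of $(E_k^{4,0})^\alpha$. The only incoming differentials into position $(4,0)$ originate from $(E_k^{4-k,k-1})^\alpha$, and this source vanishes once $k\ge 5$. It follows that $(E_\infty^{4,0})^\alpha=(E_5^{4,0})^\alpha=\coker\!\big(d_4:(E_4^{0,3})^\alpha\to (E_4^{4,0})^\alpha\big)$, which is in particular a quotient of $(E_4^{4,0})^\alpha$.

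Finally, Lemma \ref{E4isom} gives $(E_4^{4,0})^\alpha\cong (E_4^{4,0})^\beta$, and Lemma \ref{E4cyclicity} shows the latter is cyclic; hence $(E_4^{4,0})^\alpha$ is cyclic. Since a quotient of a cyclic group is cyclic, $(E_\infty^{4,0})^\alpha$ is cyclic, and therefore so is the image of $q^\ast$. I do not expect any serious obstacle here: all of the substantive work---namely the cyclicity of $(E_4^{4,0})^\alpha$, which ultimately descends from the fact that $H^3(\widetilde{M})\cong\mathbb{Z}$ through the comparison of the two spectral sequences---has already been carried out in the preceding lemmas. The one point that warrants an explicit sentence rather than being taken for granted is the identification of inflation with the bottom edge homomorphism, so that the image of $q^\ast$ is precisely $(E_\infty^{4,0})^\alpha$ and not merely contained in it; this is a well-known property of the Lyndon--Hochschild--Serre spectral sequence.
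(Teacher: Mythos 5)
Your proposal is correct and follows the paper's proof exactly: identify the image of $q^\ast$ with the edge term $(E_\infty^{4,0})^\alpha$, note that it is a quotient of $(E_4^{4,0})^\alpha$, and invoke Lemmas \ref{E4isom} and \ref{E4cyclicity}. You merely spell out the edge-homomorphism identification and the vanishing of the relevant differentials more explicitly than the paper does, which is a reasonable amount of added detail.
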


\begin{proof}
Recall that the image of $q^\ast:H^4(Q)\to H^4(G)$ is isomorphic to $(E_\infty^{4,0})^\alpha$, and that $(E_\infty^{4,0})^\alpha$ is isomorphic to a quotient of $(E_4^{4,0})^\alpha$.  By Lemma \ref{E4isom}, $(E_4^{4,0})^\alpha\cong (E_4^{4,0})^\beta$ and by Lemma \ref{E4cyclicity} $(E_4^{4,0})^\beta$ is cyclic.  Since quotients of cyclic groups are cyclic, the result follows.
\end{proof}

We are now ready to prove Lemma \ref{cupproducts}, which is an immediate consequence of Lemma \ref{H2surj} from the previous section and Lemma \ref{cyclicimage}.

\begin{proof}[Proof of Lemma \ref{cupproducts}]  Let $\omega_1$ and $\omega_2$ be elements of $H^2(G)$.  By Lemma \ref{H2surj} , $q^\ast:H^2(Q)\to H^2(G)$ is surjective, so there exist $\alpha_1,~\alpha_2\in H^2(Q)$ such that $q^\ast(\alpha_1)=\omega_1$ and $q^\ast(\alpha_2)=\omega_2$.  It follows that
\[
\omega_1\smile \omega_2=q^\ast(\alpha_1)\smile q^\ast(\alpha_2)=q^\ast(\alpha_1\smile\alpha_2),
\]
so any cup product of elements in $H^2(G)$ lies in $q^\ast(H^4(Q))$.  By Lemma \ref{cyclicimage}, the image of $q^\ast:H^4(Q)\to H^4(G)$ is cyclic. 
\end{proof}
\section{The proof of Theorem \ref{mainthm}}

We now turn to the proof of Theorem \ref{mainthm}.
\begin{proof}[Proof of Theorem \ref{mainthm}]  Since $\ker(q)\subseteq \Gamma^{(2)}$ and $\Gamma^{(2)}\subseteq \Gamma^{(1)}$, the non-degeneracy of the cup product pairing follows from Theorem \ref{features}.  The cyclicity of the image $C$ of the the cup product pairing $H^2(G)\otimes H^2(G)\to H^4(G)$ follows from Lemma \ref{cyclicimage}, so it remains to demonstrate the existence of the desired embedding $i:C\to \mathbb{Q}/\mathbb{Z}$.  

Let $\psi:H^4(G)\to H_0(G,\mathbb{Q}/\mathbb{Z})$ denote the map $\alpha\mapsto \tilde{q}_\ast([M])\frown \beta^{-1}(\alpha)$, and let $i$ denote the restriction of $\psi$ to $C$.  By Lemma 2, given $\omega_1$ and $\omega_2$ in $H^2(G)$, 
\begin{equation}\label{lambdaid}
i(\omega_1\smile \omega_2)=\tilde{q}_\ast([M])\frown\beta^{-1}(\omega_1\smile \omega_2)=\tilde{q}_\ast\left(\lambda( \tilde{q}^\ast(\omega_1), \tilde{q}^\ast(\omega_2))\right).
\end{equation}
After identifying $H_0(G,\mathbb{Q}/\mathbb{Z})$ and $H_0(M,\mathbb{Q}/\mathbb{Z})$ with $\mathbb{Q}/\mathbb{Z}$ in the natural way, the last term of this equation is equal to $\langle[M]\frown \tilde{q}^\ast(\omega_1),[M]\frown \tilde{q}^\ast(\omega_2)  \rangle$.

It remains to check that $i:C\to \mathbb{Q}/\mathbb{Z}$ is injective.  Given an abelian group $A$, let $\exp(A)$ denote the maximal order of an element of $A$.  Note that for a finite abelian group $\exp(A)=\exp(A\otimes A)$.  Since $C$ is cyclic and is isomorphic to a quotient of $H^2(G)\otimes H^2(G)$, it follows that the order of $C$ divides $\exp(H^2(G)\otimes H^2(G))=\exp(H^2(G))$.   It therefore suffices to show that $i(C)$ contains an element of order $\exp(H^2(G))$.

By Lemma \ref{order} there exist elements $\eta_1, \eta_2 \in H^2(M)$ such that the order of $\lambda(\eta_1,\eta_2)$ is equal to $\exp(H^2(M))$.  Since $\ker(q)\subseteq \Gamma^{(1)}$, $q^\ast:H^2(G)\to H^2(M)$ is an isomorphism by Lemma 3, so $\exp(H^2(G))=\exp(H^2(M))$, and there exist elements $\omega_1$ and $\omega_2$ such that $q^\ast(\omega_i)=\eta_i$.  Equation (\ref{lambdaid}) above therefore shows that 
\[
i(\omega_1\smile \omega_2)=\tilde{q}_\ast(\lambda(\eta_1,\eta_2)),
\]
and since $\tilde{q}_\ast:H_0(M,\mathbb{Q}/\mathbb{Z})\to H_0(G,\mathbb{Q}/\mathbb{Z})$ is an isomorphism, 
\[
\ord(i(\omega_1\smile \omega_2))=\ord(\tilde{q}_\ast(\lambda(\eta_1,\eta_2)))=\ord(\lambda(\eta_1,\eta_2))=\exp(H^2(G)).
\]
\end{proof}

\end{document}